\newcommand{\TC}{{T_\CC}}
\newcommand{\TR}{T}
\newcommand{\GR}{G_\RR}
\newcommand{\GC}{G_\CC}
\newcommand{\RR}{\mathbb{R}}
\newcommand{\CC}{\mathbb{C}}
\newcommand{\ZZ}{\mathbb{Z}}
\newcommand{\PP}{\mathbb{P}}
\newcommand{\Xss}{X^{\text{ss}}}
\newcommand{\Xps}{X^{\text{ps}}}
\DeclareMathOperator{\pt}{pt}
\DeclareMathOperator{\Div}{Div}
\let\SO\relax
\DeclareMathOperator{\SO}{SO}
\DeclareMathOperator{\diag}{diag}
\DeclareMathOperator{\conv}{conv}
\DeclareMathOperator{\id}{id}
\newtheorem{theorem}{Theorem}[section]
\newtheorem{lemma}[theorem]{Lemma}
\newtheorem{proposition}[theorem]{Proposition}
\newtheorem{corollary}[theorem]{Corollary}
\theoremstyle{remark}
\newtheorem{remark}[theorem]{Remark}
\theoremstyle{definition}
\newtheorem{example}[theorem]{Example}
\newtheorem{definition}[theorem]{Definition}
\title[Orbit spaces of oriented Grassmannians of planes]{Orbit spaces of maximal torus actions on oriented Grassmannians of planes}
\author[H. S\"u{\ss}]{Hendrik S\"u\ss}
\address{Hendrik S\"u\ss\\ School of Mathematics,
The University of Manchester,
Alan Turing Building,
Oxford Road
Manchester M13 9PL}
\email{\href{mailto:hendrik.suess@manchester.ac.uk}{hendrik.suess@manchester.ac.uk}}
\begin{document}
\maketitle
\begin{abstract}
Motivated by Buchstaber's and Terzi\'c' work on the complex Grassmannians $\GC(2,4)$ and $\GC(2,5)$ we describe the moment map and the orbit space of the oriented Grassmannians $\GR^+(2,n)$ under the action of a maximal compact torus. Our main tool is the realisation of these oriented Grassmannians as smooth complex quadric hypersurfaces and the relatively simple Geometric Invariant Theory of the corresponding algebraic torus action. 
\end{abstract}

\section{Introduction}
We denote an algebraic torus $(\CC^*)^k$ by $\TC$ and the corresponding compact torus $(S^1)^k \subset \TC$ by $\TR$. A complex algebraic variety with a $\TC$-action is called a $\TC$-variety. The \emph{complexity} of a $\TC$-variety is the minimal (complex) codimension of an orbit. In this paper we study the $\TR$-orbit spaces of projective $\TC$-varieties and apply our findings to the case of oriented Grassmannians of planes and that of smooth $\TC$-varieties of complexity $1$. Our main goal is to determine the corresponding $\TR$-orbit spaces up to homeomorphism.

We consider the Grassmannian $\GR^+(2,n)$ parametrising oriented planes in $\RR^2$ with the natural action of a maximal torus in $\SO_n$. Our main result determines the orbit space of this action.

\begin{theorem}
\label{thm:grassmannian}
  The orbit space $\GR^+(2,n)/\TR$ is homeomorphic to the join 
\[S^{\lfloor n/2 \rfloor-1} \ast \PP_\CC^{\lceil n/2 \rceil-2}.\]
\end{theorem}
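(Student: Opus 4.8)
The plan is to work throughout with the classical $\SO_n$-equivariant identification of $\GR^+(2,n)$ with the smooth quadric $Q=\{z_1^2+\dots+z_n^2=0\}\subset\PP_\CC^{n-1}$ (an oriented plane with oriented orthonormal basis $(u,v)$ going to $[u+iv]$), under which $\TR$ becomes the maximal compact subtorus of the diagonal torus $\TC$. Writing $m=\lfloor n/2\rfloor$ and setting $x_j=z_{2j-1}+iz_{2j}$, $y_j=z_{2j-1}-iz_{2j}$, the equation becomes $\sum_{j=1}^m x_jy_j=0$ for $n=2m$ and $\sum_{j=1}^m x_jy_j+z_n^2=0$ for $n=2m+1$, with the torus acting by $x_j\mapsto t_jx_j$, $y_j\mapsto t_j^{-1}y_j$ and $z_n\mapsto z_n$. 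First I would turn $Q/\TR$ into an explicit semialgebraic set. The functions $p_j=|x_j|^2$, $q_j=|y_j|^2$, $w_j=x_jy_j\in\CC$ (together with the $\TR$-fixed coordinate $z_n$ in the odd case) generate the ring of $\TR$-invariants on $\CC^n$, separate $\TR$-orbits, and realise $\CC^n/\TR$ as $\{p_j,q_j\ge0,\ |w_j|^2=p_jq_j\}$; hence $Q/\TR$ is the subset cut out by $\sum_j w_j\,(+z_n^2)=0$, with the origin removed, modulo the residual $\CC^*=\RR_{>0}\times S^1$ acting by rescaling and by rotation of the $w_j$ (and of $z_n$). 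Introducing $v_j=\tfrac{1}{2}(p_j-q_j)$ and $r_j=\tfrac{1}{2}(p_j+q_j)=\sqrt{|w_j|^2+v_j^2}$ and normalising $\sum_j r_j=1$ (resp.\ $2\sum_j r_j+|z_n|^2=2$) identifies $Q/\TR$ with a space fibred over the moment polytope $\Delta=\conv\{\pm e_1,\dots,\pm e_m\}=\{v\in\RR^m:\sum_j|v_j|\le1\}$ via the descended moment map $\bar\mu\colon(v,[W])\mapsto v$.

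The key point is the description of the fibres of $\bar\mu$. Over $v\in\partial\Delta$ one has $\sum_j|v_j|=1$; since $p_j+q_j\ge|p_j-q_j|=2|v_j|$ and the normalisation forces equality, we get $|w_j|^2=p_jq_j=0$ for all $j$ and $z_n=0$ in the odd case, so the fibre is a point and $\bar\mu$ restricts to a homeomorphism $\bar\mu^{-1}(\partial\Delta)\xrightarrow{\sim}\partial\Delta$. For $v\in\Delta^\circ$, eliminating the $r_j$ shows the fibre is the quotient by the diagonal circle of $\{W\in\CC^m:\sum_j\sqrt{|w_j|^2+v_j^2}\,(+\tfrac{1}{2}|\sum_jw_j|)=1\}$, where in the even case $W$ is additionally constrained by $\sum_jw_j=0$, and in the odd case $z_n$ has been eliminated up to a sign absorbed by $-1\in S^1$ (so the circle acts with weight one on $W$). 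The defining function is strictly increasing along every ray from the origin, so its level set is a star-shaped topological sphere of real dimension $2(\lceil n/2\rceil-2)+1$ carrying a free linear circle action; an $S^1$-equivariant radial retraction to the standard sphere then identifies the fibre with $\PP_\CC^{\lceil n/2\rceil-2}$ (which is also the GIT quotient $Q/\!/\TC$ linearised at the barycentre of $\Delta$, the complex-geometric input emphasised in the abstract).

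Finally I would glue these fibres into the join. Put $t=1-\sum_j|v_j|\in[0,1]$, let $\rho=v/\sum_j|v_j|\in\partial\Delta$ for $v\ne0$, and let $[W]\in\PP_\CC^{\lceil n/2\rceil-2}$ be the projective class of the $\CC^m$-coordinate. Define
\[
\Psi\colon Q/\TR\longrightarrow\partial\Delta\ast\PP_\CC^{\lceil n/2\rceil-2},\qquad (v,[W])\longmapsto(\rho,t,[W]),
\]
using the standard model of the join as $\partial\Delta\times[0,1]\times\PP_\CC^{\lceil n/2\rceil-2}$ modulo collapsing the $\PP_\CC$-factor at $t=0$ and the $\partial\Delta$-factor at $t=1$. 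This is well defined: at $t=0$ one has $W=0$ but the $\PP_\CC$-coordinate is collapsed, and at $t=1$ one has $v=0$ but the $\partial\Delta$-coordinate is collapsed. Bijectivity follows from the fibre analysis: for $t\in(0,1)$ one recovers $v=(1-t)\rho$ and then $W$ uniquely as the intersection of the star-shaped level-set sphere over $v$ with the ray through the origin prescribed by $[W]$, while the two extreme fibres match by the computations above. As $Q/\TR$ is compact and the join is Hausdorff, $\Psi$ is a homeomorphism, and $\partial\Delta\cong S^{m-1}=S^{\lfloor n/2\rfloor-1}$ completes the proof.

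The step I expect to be the main obstacle is the continuity of $\Psi$ at the two ends of the join: one must show that the $\PP_\CC$-fibre collapses continuously onto $\partial\Delta$ as $v$ tends to $\partial\Delta$ (equivalently, that the chosen representative $W$ degenerates to $0$ in a controlled way), and, in tandem, that the free weight-one $S^1$-quotient of the star-shaped sphere is genuinely $\PP_\CC^{\lceil n/2\rceil-2}$ and not merely homotopy equivalent to it; the equivariant radial retraction to the round sphere is exactly the tool that settles both.
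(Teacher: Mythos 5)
Your proposal is correct, and it reaches the same structural conclusion as the paper --- point fibres of the moment map over $\partial P$, fibres $\PP_\CC^{\lceil n/2\rceil-2}$ over the interior of the cross-polytope, assembled into a join --- but by a genuinely different route. The paper runs everything through the GIT/Kempf--Ness framework of its Section 2: Corollary~\ref{cor:orbit-space} glues the orbit space out of the inverse system of GIT quotients, Lemma~\ref{lem:boundary-preimages} handles the boundary fibres, and the interior fibres are identified with $\PP^{k-2}_\CC$ via the single $\TC$-invariant rational map $q(z)=(z_3z_4:\cdots:z_{n-1}z_n)$ (which in your notation is exactly $[W]$ restricted to the hyperplane $\sum_j w_j=0$), with Lemma~\ref{lem:q-quotient-map} showing by a normalisation argument that $q$ separates $\TR$-orbits inside each moment fibre; Lemma~\ref{lem:join} then produces the join. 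You instead compute the orbit space directly as a semialgebraic set via the real invariants $p_j,q_j,w_j$, identify the interior fibres as star-shaped spheres modulo a Hopf circle, and write down the join homeomorphism by hand. Your boundary analysis is the same triangle-inequality argument as Lemma~\ref{lem:boundary-preimages}, and your computations check out, including the weight bookkeeping in the odd case where $z_n$ is eliminated up to the sign absorbed by $-1\in S^1$. What your approach buys is elementariness and an explicit map; what the paper's buys is that the global map $q$ makes the compatibility of the fibrewise identifications automatic (precisely the continuity issue you flag), and that the framework is reused wholesale for the complexity-one results in Section 4. Two small points: the residual circle acts on $W$ with weight two in the even case, so it is the effective quotient circle, not the parametrising one, that acts freely on the level set (the orbit space is unaffected); and the continuity you worry about is most cheaply settled by checking continuity of the \emph{inverse} of $\Psi$ out of the join (the scaling parameter placing $W$ on the star-shaped level set depends continuously on $(v,[W])$ and tends to $0$ as $v\to\partial\Delta$) and then invoking the same compact-to-Hausdorff argument the paper uses in Corollary~\ref{cor:orbit-space} and Lemma~\ref{lem:holed-spheres}.
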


For smooth varieties with a torus action of complexity $1$ we derive the following general results on the structure of their orbit spaces.

\begin{theorem}
\label{thm:cplx-1}
Consider a smooth projective $\TC$-variety $X$ of complexity $1$. Then the corresponding orbit space $X/\TR$ is a topological manifold with boundary.
\end{theorem}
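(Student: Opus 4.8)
The plan is to verify the manifold-with-boundary property locally, reducing it to a computation with linear torus actions. We may assume the $\TC$-action is effective (otherwise replace $\TC$ by its effective quotient, which changes neither the orbits nor $X/\TR$), so that $\dim_\CC X=k+1$ with $k=\dim\TR$. As $X/\TR$ is compact and metrizable, it suffices to produce, for each point, a neighbourhood homeomorphic to $\RR^{k+2}$ or to the half-space $\RR^{k+1}\times\RR_{\geq0}$; invariance of domain then makes the boundary well defined. Fix $p\in X$, let $H:=\TR_p$ be its stabiliser (a compact abelian group whose identity component is a subtorus of rank $j\leq k$), and apply the slice theorem: a $\TR$-invariant neighbourhood of $\TR\cdot p$ is equivariantly homeomorphic to $\TR\times_H W$ for $W$ the normal space of the orbit carrying its linear $H$-action, so a neighbourhood of $[p]$ in $X/\TR$ is homeomorphic to $W/H$.

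I would then identify $W$. Decomposing the $\TC$-orbit through $p$, a copy of $(\CC^*)^{k-j}$, as the product of its maximal compact subgroup orbit $\TR\cdot p\cong(S^1)^{k-j}$ and the transverse radial directions, one obtains an $H$-equivariant splitting $W\cong\CC^{j+1}\oplus\RR^{k-j}$ in which $H$ acts trivially on the second summand (it lies in $T_p(\TC\cdot p)$, on which the stabiliser $\TC_p$ acts trivially) and acts on the complex slice $\CC^{j+1}$ with finite kernel, since a positive-dimensional kernel would act trivially on the open set $\TR\times_H W$ and hence, $X$ being irreducible with effective action, on all of $X$. Therefore $W/H\cong(\CC^{j+1}/H)\times\RR^{k-j}$, and the theorem reduces to the following.

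\textbf{Key Lemma.} \emph{If a compact abelian group $H$ of rank $j$ acts $\CC$-linearly and almost effectively on $\CC^{j+1}$, then $\CC^{j+1}/H$ is homeomorphic to $\RR^{j+2}$ or to $\RR^{j+1}\times\RR_{\geq0}$.}

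To prove it I would pass to the diagonal torus $T'=(S^1)^{j+1}$ of $\GL_{j+1}(\CC)$. The image of $H$ in $T'$ is, up to finite groups, a codimension-one subtorus, so $T'/H$ is a circle; this circle acts on $\CC^{j+1}/H$ with quotient the full octant $\CC^{j+1}/T'=\RR^{j+1}_{\geq0}$. Let $\sum_i a_i\chi_i=0$ be the essentially unique primitive integral relation among the weights $\chi_0,\dots,\chi_j$ of $H$ on $\CC^{j+1}$. One checks that the circle fibre of the map $\CC^{j+1}/H\to\RR^{j+1}_{\geq0}$ over the relative interior of a face collapses to a point exactly when a coordinate hyperplane $\{u_i=0\}$ containing that face has $a_i\neq0$, and is the whole circle otherwise. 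Hence $\CC^{j+1}/H$ is obtained from $\RR^{j+1}_{\geq0}\times S^1$ by collapsing the circle over $\bigcup_{a_i\neq0}\{u_i=0\}$; straightening corners and using that collapsing $\{0\}\times S^1$ inside $\RR_{\geq0}\times S^1$ produces a plane, one gets $\RR^{j+2}$ when every $a_i\neq0$ and $\RR^{j+1}\times\RR_{\geq0}$ when some $a_i=0$ --- so, back on $X$, the point $[p]$ lies on the boundary precisely when the slice weights fail to be in general linear position. The main obstacle is to make the previous paragraph rigorous: to show that $\CC^{j+1}/H$ genuinely is the abstract collapse of $\RR^{j+1}_{\geq0}\times S^1$ just described, i.e. that the overlying circle action is locally standard over the octant and that no hidden twisting of the circle bundle over the open faces occurs. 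I would treat this either through the cone structure of $\CC^{j+1}/H$, reducing to the compact quotient $S^{2j+1}/H$ and identifying it with $S^{j+1}$ or with $D^{j+1}$ by an explicit map, or by a direct local analysis applying the slice theorem once more inside $\CC^{j+1}$; the residual bookkeeping with the weights $\chi_i$ is routine.
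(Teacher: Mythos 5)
Your argument is correct in outline, and it takes a genuinely different route from the paper. The paper works globally: it linearises a complexity-one action on $\CC^d$, compactifies to $\PP^d_\CC$, shows by an explicit GIT computation (Proposition~\ref{prop:projective-space-cplx-1}) that $\PP^d_\CC/\TR$ is a disc or a sphere, and then globalises either via $X/\TR \approx P\times Y$ (when no contractions occur) or by covering $X$ with equivariant copies of $\CC^n$, quoting a structure theorem for complexity-one actions. You argue purely locally: the differentiable slice theorem for the compact torus reduces everything to the quotients $\CC^{j+1}/H$ of the slice representations, and your Key Lemma is the local analogue of Proposition~\ref{prop:projective-space-cplx-1} --- indeed your dichotomy (all $a_i\neq 0$ versus some $a_i=0$) mirrors the paper's dichotomy $\Delta=\emptyset$ versus $\Delta\neq\emptyset$ there, and the Key Lemma is true as stated. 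This is closer in spirit to the paper's remark following its proof (reduction via Luna slices), but you push it further by working at arbitrary points rather than fixed points. What your approach buys: independence from the linearisation and equivariant-covering theorems and from the GIT formalism, applicability to non-projective smooth $\TC$-manifolds, and an intrinsic pointwise criterion for when $[p]$ lies on the boundary. What the paper's approach buys: the global combinatorial description of $X/\TR$ in terms of the momentum polytope, which feeds into Proposition~\ref{prop:combinatorial-holed-sphere} and Theorem~\ref{thm:cplx-1-sphere} and which a purely local argument cannot give.

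The step you flag as the main obstacle does close, and without any bundle-theoretic input. Writing each coordinate of $\CC^{j+1}$ in polar coordinates exhibits $\CC^{j+1}$ as the $T'=(S^1)^{j+1}$-equivariant identification space $(\RR^{j+1}_{\geq0}\times T')/{\sim}$, where $\sim$ collapses the $i$-th circle factor over $\{u_i=0\}$; quotienting by the image $\bar H\subset T'$ then literally produces the collapse of $\RR^{j+1}_{\geq0}\times(T'/\bar H)$ that you describe, and no twisting can occur because the trivialisation is global before the quotient is taken. Two pieces of bookkeeping remain but are harmless: when $a_i=0$ the $i$-th coordinate circle need not lie in $\bar H$ itself, only in the possibly larger finite extension $\ker(\sum_i a_i e_i^*)$, so over such faces the circle fibre gets divided by a finite rotation group (still a circle); and the component group of $H$ likewise acts through finite rotations on the fibres and through roots of unity on the collapsed directions. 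Neither changes the homeomorphism type, so the conclusion $\CC^{j+1}/H\approx\RR^{j+2}$ or $\RR^{j+1}\times\RR_{\geq0}$ stands, and with it your proof.
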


\begin{theorem}
\label{thm:cplx-1-sphere}
Consider a smooth projective $\TC$-variety $X$ of complexity $1$ with only finitely many lower dimensional $\TC$-orbits. Then the orbit space $X/\TR$ is homeomorphic to a sphere.
\end{theorem}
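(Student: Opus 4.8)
Write $k$ for the rank of $\TC$; then $\dim_\CC X = k+1$ and the orbit space $M := X/\TR$ has dimension $(2k+2)-k = k+2$. By Theorem~\ref{thm:cplx-1} we already know $M$ is a topological manifold, a priori with boundary, so the plan is to show that $M$ is in fact a \emph{closed} simply connected homology $(k+2)$--sphere and then to quote the topological Poincar\'e conjecture, now a theorem in every dimension, to conclude $M \cong S^{k+2}$.

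First I would show $\partial M = \emptyset$. By the slice theorem for $\TR$, a boundary point of $M$ is represented by some $x$ whose slice representation --- the linear action of the compact isotropy group $\TR_x$ on the space normal to the orbit --- has a trivial summand; since this normal space is $S_\CC \oplus \RR^{k-\dim\TR_x}$ with $\TR_x$ acting trivially on the second factor and through the weights of the complex slice $S_\CC$ on the first, this forces a zero weight in $S_\CC$, and hence the fixed locus $X^{\TC_x}$ of the isotropy subtorus to be strictly larger than $\TC\cdot x$. But $X^{\TC_x}$ is a smooth projective $\TC$--variety whose orbits are all lower dimensional (their stabilisers contain $\TC_x$), so if it is bigger than $\TC\cdot x$ the induced torus action on some component of it has no dense orbit, hence infinitely many orbits --- contradicting the hypothesis. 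So $M$ is a closed $(k+2)$--manifold.

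Next I would analyse the orbit stratification. The top stratum $U = X_{(k)}$ is open, dense and $\TR$--invariant, and the quotient $U \to C := U/\TC$ presents $U/\TR$ as a bundle over the smooth curve $C$ with fibre $(\CC^*)^k/(S^1)^k \cong \RR^k$ (there is no finite ambiguity, since a finite subgroup of $(\CC^*)^k$ lies in $(S^1)^k$); thus $U/\TR \simeq C$. Using the hypothesis once more, a generic one--parameter subgroup of $\TC$ has only the finitely many $\TC$--fixed points as its Bia\l{}ynicki-Birula sinks and sources, so $X$ carries an affine cell decomposition and is in particular simply connected; after an equivariant resolution the rational quotient is a morphism from a simply connected smooth projective variety onto the completion $\bar C$ with connected fibres, whence $\bar C \cong \PP^1$ and $U/\TR \simeq S^2$. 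The complement $\bar Z = X \setminus U$ is a projective $\TC$--variety of dimension $\le k-1$ with finitely many orbits, so $\bar Z/\TR$ is a finite polyhedral complex, and along each orbit $O\subset\bar Z$ of dimension $i$ the slice theorem together with the previous paragraph identifies a neighbourhood of $O/\TR$ in $M$ with $\RR^{i}\times\bigl(\CC^{k-i+1}/(S^1)^{k-i}\bigr)\cong\RR^{k+2}$.

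Finally one must glue these pieces together. I would decompose $M$ into a regular neighbourhood of $\bar Z/\TR$ and the open set $U/\TR\simeq S^2$, read off the linking spheres from the local models above and from the toric structure of $\bar Z/\TR$, and run the Mayer--Vietoris sequence to identify $H_*(M)$ with $H_*(S^{k+2})$; simple connectivity then follows from van Kampen, as $U/\TR$ is simply connected and everything coming from $\bar Z/\TR$ is attached along subsets of codimension $\ge 1$. A cleaner way to organise the same computation is through the moment map $\bar\mu\colon M\to P$: the image $P$ is a $k$--ball, the reduced space $\bar\mu^{-1}(\xi)$ over an interior point is a $2$--sphere --- its genus cannot jump, because wall--crossings and the behaviour over $\partial P$ are controlled by the finitely many lower orbits --- and the degeneration of these spheres over $\partial P$ exhibits $M$ as $\partial(D^{k}\times D^{3}) = S^{k-1}\ast S^2 = S^{k+2}$. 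I expect this last step to be the real obstacle: showing that the lower--dimensional strata attach in the essentially unique manner compatible with a sphere (equivalently, that the reduced surfaces are genuinely $2$--spheres and that $M$ collapses over $\partial P$ in the join pattern). Everything else --- the dimension count, the vanishing of $\partial M$, the fibration of $U/\TR$ over $\PP^1$, and the appeal to the Poincar\'e conjecture --- should be comparatively routine.
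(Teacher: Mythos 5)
You have guessed the correct answer and even the shape of the paper's argument --- your ``cleaner way'' at the end, exhibiting $X/\TR$ as the join $S^{k-1}\ast S^2$ via the moment map, is exactly the route the paper takes --- but the step you yourself flag as ``the real obstacle'' is precisely the content of the proof, and you do not supply it. The paper closes this gap with the machinery of Section~\ref{sec:t-varieties}: the Goresky--MacPherson description (Theorem~\ref{thm:goresky-macpherson}, Corollary~\ref{cor:orbit-space}) presents $X/\TR$ as $(P\times Y)/{\sim_r}$, glued from the inverse system of GIT quotients over the stratification of the moment polytope; finitely many lower-dimensional orbits forces every moment fibre over $\partial P$ to be a single $\TR$-orbit, so all boundary quotients are points; Lemma~\ref{lem:2-cases-cplx-1} (a cited result) then forces the generic quotient to be $\PP^1_\CC$; and Proposition~\ref{prop:trivial-git} together with Lemma~\ref{lem:join} converts the resulting collapse of the $\PP^1_\CC$-factor over $\partial P$ into the join $S^{k-1}\ast\PP^1_\CC\approx S^{k+2}$. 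Your alternative derivation that the quotient curve has genus~$0$ (Bia\l{}ynicki-Birula cells, hence $X$ simply connected, hence $\bar C\cong\PP^1_\CC$) is a reasonable substitute for the citation of Lemma~\ref{lem:2-cases-cplx-1}, but it does not address the gluing, which is where the theorem actually lives.

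The first route you sketch (closed simply connected homology sphere plus the topological Poincar\'e conjecture) would in principle work but needs the same missing input and has an additional problem as written: the identification $U/\TR\simeq C$ treats $C=U/\TC$ as a smooth curve, whereas the topological quotient of the union of maximal orbits by $\TC$ is in general non-Hausdorff. Already for the $\CC^*$-action on $\PP^2_\CC$ of Example~\ref{exp:p2} the two orbits $T.(1{:}0{:}1)$ and $T.(0{:}1{:}1)$ are both limits of the generic orbits $T.(\alpha{:}1{:}1)$ as $\alpha\to 0$ and cannot be separated in $U/\TC$; one must instead work with a GIT quotient $Y_\lambda$ over a fixed interior chamber. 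Moreover the Mayer--Vietoris/van Kampen computation is only announced: without actually matching the local models along the lower strata --- which again amounts to establishing the join identification --- you cannot conclude $H_*(M)\cong H_*(S^{k+2})$. So the proposal identifies the right statement and some genuinely useful ingredients, but as it stands the theorem is not proved.
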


Note, that results comparable to Theorem~\ref{thm:cplx-1} have been proved by Ayzenberg in \cite{ayz18} and by Vladislav Cherepanov in his unpublished diploma thesis. Moreover, Theorem~\ref{thm:cplx-1-sphere} was proved independently, but using similar methods, by Karshon and Tolman in  \cite{kt-top}. Their work covers the more general setting of symplectic manifolds with Hamiltonian torus actions. In their paper they also prove Theorem~\ref{thm:grassmannian} for the cases of complexity $1$, i.e. for $n=5,6$.

\medskip
Our main tool is Geometric Invariant Theory (GIT) and its symplectic counterpart in combination with the Kempf-Ness Theorem. This approach suggest to stratify the manifold and eventually the orbit space via a polyhedral subdivision of the momentum polytope, which encodes the variation of GIT quotients. In general these stratifications can become arbitrarily  complicated. However, in the cases considered in this paper they turn out to be almost trivial allowing us to derive concrete results about the orbits spaces.

In Section~\ref{sec:t-varieties} we fix our setting for compact torus actions induced by algebraic torus actions on complex varieties and recall crucial results from Geometric Invariant Theory. Moreover, we derive first results on the structure of orbits spaces in suitable situations. We then apply these to the special cases of oriented Grassmannians of planes in Section~\ref{sec:orient-grassm} and $\TC$-varieties of complexity $1$ in Section~\ref{sec:complexity-one}.

In order to distinguish between the algebraic and the topological category, we are going to denote isomorphism of algebraic varieties by $\cong$ and homeomorphisms of topological spaces by $\approx$.

\subsection*{Acknowledgement}
This research was supported by the program 
\emph{Interdisciplinary Research} conducted jointly by the Skolkovo Institute of Science and Technology and the Interdisciplinary Scientific Center J.-V. Poncelet. In particular, I am grateful for the warm hospitality offered by Center Poncelet. This work was also partially supported by the grant 346300 for IMPAN from the Simons Foundation and the matching 2015-2019 Polish MNiSW fund. I would like to thank Anton Ayzenberg, Victor Buchstaber, Alexander Perepechko and Nigel Ray for stimulating discussions on the subject of this paper.
Finally, I want to thank Yael Karshon  and Susan Tolman for making their draft \cite{kt-top} available to me.

\section{$\TC$-varieties and their $\TR$-orbit spaces}
\label{sec:t-varieties}
Fix a linearised action of an algebraic torus $\TC=(\CC^*)^k$ on $\PP^N_\CC$ with weights $u_0, \ldots, u_N \in \ZZ^k$, i.e. for $t=(t_1,\ldots,t_k) \in \TC$ 
we have
\[t.(z_0:\ldots:z_N) = (t^{u_0}z_1: \ldots: t^{u_N}z_N),\]
where $t^{u_j} := t_1^{(u_j)_1}\cdots t_k^{(u_j)_k}$.
Then a moment map of this action is given by
\[
\nu:\PP^N_\CC \to \RR^k; \quad (z_0: \ldots z_N) \mapsto \frac{\sum_j |z_j|^2 u_j}{\sum_j |z_j|^2}.
\]
For an embedded projective variety $X \subset \PP^N_\CC$, which is invariant under under this torus action, a moment map of the induced torus action on $X$ is given by the restriction $\mu = \nu|_X$. The moment image $P = \mu(X)$ is known to be a convex polytope \cite{zbMATH03757356,zbMATH03793067}. We start with some notions known as variation of GIT quotients with \cite{zbMATH00108945,zbMATH01560390,zbMATH01308132} being the most relevant references. For a point $x \in X$ the moment image $\Delta(x)=\mu(\overline{\TC.x}) \subset P$ of its orbit closure is again a polytope and the orbit $\TC.x$ is mapped to the relative interior $\Delta^{\!\circ}(x) \subset \Delta(x)$.

For a point $u \in P$ we define
\[\Xss(u)=\{x \in X \mid u \in \Delta(x)\}, \quad \Xps(u)=\{x \in X \mid u \in \Delta^\circ(x)\}.\]
Hence, $\Xss(u)$ consists of those points in $X$ whose orbit closures intersect $\mu^{-1}(u)$ and $\Xps(u)$ consists of those points whose orbits intersect $\mu^{-1}(u)$. Equivalenty $\Xps(u)$ is the union of closed $\TC$-orbits in $\Xss(u)$. 

Now for every $u\in P$ we may consider 
\begin{equation}
\lambda(u)=\bigcap_{x, u \in \Delta(x)}\Delta(x); \quad \lambda^\circ(u)= \bigcap_{x, u \in \Delta^{\!\circ}(x)}\Delta^\circ(x) \label{eq:stratification1}
\end{equation}
Since only finitely many polytopes occur as moment images of orbit closures their intersections are again polytopes. We denote the set of all these polytopes $\lambda(u)$ by $\Lambda$. This set is partially ordered by the face relation $\prec$. The polytopes $\lambda \in \Lambda$ form a polyhedral subdivision of $P$ and one obtains a stratification of $P$ via their relative interiors.
\begin{equation}
P = \bigsqcup_{\lambda \in \Lambda} \lambda^\circ.\label{eq:stratification2}
\end{equation}
For $u\in P$ let us denote by $\lambda(u)$ the unique element of $\lambda \in \Lambda$ such that $u \in \lambda^\circ$.

From the definitions above it follows that $\Xss(u)=\Xss(v)$ if and only if 
$\lambda(u)=\lambda(v)$, i.e. $u$ and $v$ are contained in the relative interior of the same element of $\Lambda$. In this case also $\Xps(u)=\Xps(v)$ holds. Hence, we may define $\Xss_\lambda = \Xss(u)$ and $\Xps_\lambda = \Xps(u)$  for $u \in \lambda^\circ$.

\begin{example}
  \label{exp:p2}
We consider the linear $T=\CC^*$-action on $\PP_\CC^2$ given by $t.(x:y:z) = (tx:t^{-1}y:z)$. Then  the moment map is given by $\mu(x:y:z)= \frac{|x|^2-|y|^2}{|x|^2+|y|^2+|z|^2}$. We get $P=\mu(\PP^2)=[-1,1] \subset \RR$. The orbits can be described as follows. We have the fixed points  $(1:0:0)$, $(0:1:0)$ and $(0:0:1)$ with moment images $1$, $-1$ and $0$, respectively. The moment images of the other $\TC$-orbits are
\begin{align*}
\mu(T.(1:0:1))&=(0,1) \subset \RR,\\
\mu(T.(0:1:1))&=(-1,0) \subset \RR,\\
\mu(T.(1:1:0)) &= (-1,1) \subset \RR,\\
\mu(T.(\alpha:1:1))&=(-1,1) \subset \RR,\, \alpha \in \CC^*.
\end{align*}
Hence, in this case $\Lambda$ is obtained by subdividing the interval $[-1,1]$ at the point $0$, or more formally \(\Lambda=\{[-1,0],[0,1],\{-1\},\{0\},\{1\}\}.\)
\end{example}

\medskip
By the Kempf-Ness Theorem for rational values of $u$ the definition of $\Xss(u)$ coincides with the semi-stable locus of Mumford's Geometric Invariant Theory.
Hence, there exists a categorical quotient morphism
$q_\lambda \colon \Xss_\lambda \to Y_\lambda = \Xss_\lambda/\!\!/\TC$ where
$Y_\lambda$ is an orbit space for the $\TC$-action on $\Xps_\lambda$ and
the corresponding quotient map is given by the restriction of $q_\lambda$ to $\Xps_\lambda$. The occurring quotients $Y_u$ have the expected dimension for $u\in P^\circ$, but can be lower-dimensional for elements $u \in \partial P$. By \cite[Lem.~7.2]{zbMATH03880866} for $u \in \lambda^\circ$ every $\TC$-orbit in $\Xps_\lambda$ intersects $\mu^{-1}(u)$ in exactly one $\TR$-orbit. Hence, the restriction of $q_\lambda$ to $\mu^{-1}(u)$ induces a homeomorphisms between $Y_\lambda = \Xss_\lambda/\!\!/\TC$ and the topological orbit space $\mu^{-1}(u)/\TR$.  Moreover the inclusion $X^\lambda \subset X^\gamma$ for $\gamma \prec \lambda$ induces contraction morphisms on the level of quotients $p_{\gamma\lambda} \colon Y_\lambda \to Y_\gamma$ forming an inverse system.

\begin{example}
  \label{exp:toric}
  When $\dim X = \dim \TC$, i.e. if the variety is toric, then the moment image $P$ completely determines the variety. The moment images $\Delta(x)$ of $\TC$-orbit closures are just the faces of the polytope $P$ and the stratification of $P$ is the decomposition of $P$ into the relative interiors of its faces. The preimage $\mu^{-1}(u)$ consists of exactly on $\TR$-orbit with dimension equal to dimension of the face containing $u$ in its interior. Consequently $X/\TR \approx P$ with $\mu$ coinciding with the quotient map. Alternatively, we may apply
  Proposition~\ref{prop:trivial-git} below and obtain $X/\TR \approx S^{k-1}*\{\pt\} \approx D^k \approx P$.
\end{example}

\begin{example}
\label{exp:toric-downgrade}
Consider a projective toric variety $X$ corresponding to a polytope $Q \subset \RR^d$. Then the inclusion of a $k$-dimensional subtorus $\TC' \subset \TC$ induces a surjection $F:\RR^d \to \RR^k$. Given a moment map $\mu$ for the $\TC$-action a corresponding moment map $\mu'\colon X \to \RR^k$ is given by $\mu'=F \circ \mu$. Hence, the moment image for the $\TC'$-action is $P:=F(Q)$ and the stratification of $P$ is induced by the images of the faces of $Q$. More precisely,
the stratification consists of the relative interiors of the polytopes
\[\lambda(u) = \bigcap_{\tau \prec Q,\;u \in F(\tau)}\!\! F(\tau).\]

Moreover, the GIT quotients $\Xss(u)/\!\!/\TC'$ are again toric varieties corresponding to the polytope $F^{-1}(u) \cap Q$, see \cite[Prop.~3.5]{zbMATH00034160}. 
\end{example}

Already in \cite{zbMATH04029746}  it has been observed that orbit space of the $\TR$-action on $X$ can be constructed out of the inverse system of GIT quotients.
\begin{theorem}[{\cite[\S5]{zbMATH04029746}}]
\label{thm:goresky-macpherson}
  We have
\[X/\TR \approx \left(\bigsqcup_{\lambda \in \Lambda} \lambda \times Y_\lambda\right)/{\sim},\]
where $(u,y) \sim (u,y')$ if $(u,y) \in \gamma \times Y_\gamma$,
$(u,y') \in \lambda \times Y_\lambda$ with $\gamma \prec \lambda$ 
and $p_{\gamma\lambda}(y')=y$.
\end{theorem}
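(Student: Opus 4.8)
Write $Z=\bigl(\bigsqcup_{\lambda\in\Lambda}\lambda\times Y_\lambda\bigr)/{\sim}$ for the space on the right-hand side and let $\pi$ denote the quotient map onto it. The plan is to exhibit an explicit continuous bijection $\Theta\colon X/\TR\to Z$. Since $X$ is a projective variety (hence compact) and $\TR$ is a compact group, $X/\TR$ is compact Hausdorff, so once $Z$ is shown to be Hausdorff the bijection $\Theta$ is automatically a homeomorphism; no separate argument for the inverse map is then needed.

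The map $\Theta$ is forced by the preceding discussion. As $\mu$ is $\TR$-invariant, $[x]\mapsto\mu(x)$ is well defined on $X/\TR$; for $x\in X$ set $u=\mu(x)$ and let $\lambda=\lambda(u)\in\Lambda$ be the cell with $u\in\lambda^\circ$. Since $x\in\TC.x$ we have $u\in\Delta^{\!\circ}(x)$, hence $x\in\Xps_\lambda\subseteq\Xss_\lambda$, and we set $\Theta([x])=\pi\bigl(u,q_\lambda(x)\bigr)$; this is independent of the chosen representative because $\mu$ and $q_\lambda$ are $\TC$-invariant. Bijectivity is essentially a reformulation of the fibrewise fact recalled above, that for $u\in\lambda^\circ$ the restriction of $q_\lambda$ induces a homeomorphism $\mu^{-1}(u)/\TR\approx Y_\lambda$. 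For surjectivity, a point $[(u,y)]\in Z$ with $(u,y)\in\lambda\times Y_\lambda$ is first pushed down to $\gamma=\lambda(u)\preceq\lambda$ by replacing $y$ with $p_{\gamma\lambda}(y)\in Y_\gamma$, and $p_{\gamma\lambda}(y)$ is then lifted to a $\TR$-orbit in $\mu^{-1}(u)$, which $\Theta$ sends back to $[(u,y)]$. For injectivity one checks that for $u\in\lambda(u)^\circ$ no two distinct points of $\{u\}\times Y_{\lambda(u)}$ are identified by $\sim$ (using the inverse-system relations $p_{\gamma\delta}=p_{\gamma\lambda}\circ p_{\lambda\delta}$), so $\Theta([x])=\Theta([x'])$ forces $\mu(x)=\mu(x')=u$ and $q_{\lambda(u)}(x)=q_{\lambda(u)}(x')$, whence $[x]=[x']$ by injectivity of $\mu^{-1}(u)/\TR\to Y_{\lambda(u)}$.

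For continuity it suffices, since $X\to X/\TR$ is a quotient map, to show that $F\colon X\to Z$, $F(x)=\pi\bigl(\mu(x),q_{\lambda(\mu(x))}(x)\bigr)$, is continuous; this is where the inverse system is really used. Fix $x_0$, put $u_0=\mu(x_0)$, $\gamma=\lambda(u_0)$, $y_0=q_\gamma(x_0)$. The inputs are that the semistable locus $\Xss_\gamma$ is open in $X$, and that for a face $\gamma\prec\delta$ one has $\Xss_\delta\subseteq\Xss_\gamma$ and $p_{\gamma\delta}\circ q_\delta=q_\gamma|_{\Xss_\delta}$. Analysing the quotient topology of $Z$ near $\pi(u_0,y_0)$ one finds a neighbourhood basis consisting of the sets
\[
N(V,U)=\pi\Bigl(\;\bigsqcup_{\delta\succeq\gamma}\{(u,z)\in\delta\times Y_\delta\ :\ u\in V,\ p_{\gamma\delta}(z)\in U\}\Bigr),
\]
where $V$ runs over open neighbourhoods of $u_0$ inside the open star $\bigcup_{\delta\succeq\gamma}\delta^\circ$ of $\gamma$ and $U$ over open neighbourhoods of $y_0$ in $Y_\gamma$; checking openness and cofinality of these uses properness of the morphisms $p_{\gamma\delta}$, i.e. projectivity of the $Y_\delta$. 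Given such $V,U$, any $x$ close to $x_0$ satisfies $x\in\Xss_\gamma$, $\mu(x)\in V$, $q_\gamma(x)\in U$, and with $\delta:=\lambda(\mu(x))\succeq\gamma$ the compatibility gives $p_{\gamma\delta}(q_\delta(x))=q_\gamma(x)\in U$, so $F(x)=\pi(\mu(x),q_\delta(x))\in N(V,U)$; hence $F$, and therefore $\Theta$, is continuous. The same description of the topology of $Z$ also yields Hausdorffness: points over distinct values of the first coordinate are separated by preimages of disjoint opens in $P$, and two distinct points over the same $u$ are separated by $N(V,U)$ and $N(V,U')$ for a small $V$ and disjoint $U,U'\subseteq Y_{\lambda(u)}$.

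The step I expect to be the genuine obstacle is the point-set bookkeeping underlying the third paragraph: identifying a workable neighbourhood basis of the glued space $Z$ and verifying that it meshes with the jumping behaviour of the quotient maps $q_\lambda$ as the moment image crosses from a cell to one of its faces. Once the fibrewise homeomorphisms $\mu^{-1}(u)/\TR\approx Y_{\lambda(u)}$, the openness of the semistable loci, and the identities $p_{\gamma\delta}\circ q_\delta=q_\gamma$ are in hand, everything else is formal.
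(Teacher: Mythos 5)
The paper does not prove this statement at all: it is quoted directly from the cited reference (Goresky--MacPherson, \S 5), so there is no in-paper argument to compare yours against. Judged on its own terms, your reconstruction is essentially correct and follows the natural line: the map $\Theta([x])=\pi(\mu(x),q_{\lambda(\mu(x))}(x))$ is well defined because $\mu(x)\in\Delta^{\!\circ}(x)$ forces $x\in\Xps_{\lambda(\mu(x))}$; bijectivity reduces, exactly as you say, to the fibrewise homeomorphism $\mu^{-1}(u)/\TR\approx Y_{\lambda(u)}$ already recalled in the paper from Kirwan, together with the observation that the inverse-system identities collapse every $\sim$-chain over a fixed $u$ to a single element of $Y_{\lambda(u)}$; and the compact-to-Hausdorff trick disposes of the inverse map (it is the same device the paper uses in the proof of its Corollary on orbit spaces). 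The one place where your write-up is a sketch rather than a proof is the third paragraph: to see that the sets $N(V,U)$ are open and cofinal you need (i) that any cell $\delta$ meeting the open star of $\gamma$ satisfies $\gamma\preceq\delta$, so that $\pi^{-1}(N(V,U))\cap(\delta\times Y_\delta)=\{(u,z):u\in V,\ p_{\gamma\delta}(z)\in U\}$ is genuinely open in every piece, and (ii) a tube-lemma argument over the compact fibres $p_{\gamma\delta}^{-1}(y_0)$, ranging over the finitely many $\delta\succeq\gamma$, to shrink a given open neighbourhood of $\pi(u_0,y_0)$ to some $N(V,U)$. You name the right ingredients (properness of the $p_{\gamma\delta}$, openness of $\Xss_\gamma$, the compatibility $p_{\gamma\delta}\circ q_\delta=q_\gamma|_{\Xss_\delta}$), so I regard this as an honest, completable sketch rather than a gap; spelling out (i) and (ii) would make it a full proof.
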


We easily derive the following result, which turns out to be a little bit handier in some situations.

\begin{corollary}
\label{cor:orbit-space}
 Assume that we have a compact topological space $Y$ and with proper surjective maps $r_\lambda \colon Y \to Y_\lambda$ being compatible with the inverse system above. Then we have the following homeomorphism.
\[X/\TR \approx (P \times Y)/{\sim_r}.\]
Here, the equivalence relation is generated by 
\[(u,y) \sim_r (u,y') \Leftrightarrow r_{\lambda(u)}(y) = r_{\lambda(u)}(y')\]
 for $u \in \lambda^\circ$.
\end{corollary}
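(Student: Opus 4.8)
The plan is to factor the map $P \times Y \to X/\TR$ through the space $\left(\bigsqcup_\lambda \lambda \times Y_\lambda\right)/{\sim}$ appearing in Theorem~\ref{thm:goresky-macpherson} and then check that the induced map on the quotient by $\sim_r$ is a homeomorphism. Concretely, I would define $\Phi \colon P \times Y \to \left(\bigsqcup_\lambda \lambda \times Y_\lambda\right)/{\sim}$ by sending $(u,y)$ to the class of $(u, r_{\lambda(u)}(y)) \in \lambda(u) \times Y_{\lambda(u)}$, where $\lambda(u)$ is the unique stratum with $u \in \lambda(u)^\circ$. The first thing to verify is that $\Phi$ is continuous. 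This is the point that requires care, because $\lambda(u)$ jumps as $u$ moves from the relative interior of a stratum to its boundary; continuity must be checked by approaching a point $u_0$ in a lower stratum $\gamma$ from a higher stratum $\lambda$ with $\gamma \prec \lambda$, and there one uses exactly the compatibility of the $r_\lambda$ with the contraction morphisms $p_{\gamma\lambda}$, together with continuity of $p_{\gamma\lambda}$, to see that $(u, r_\lambda(y))$ converges, in the identified quotient, to $(u_0, r_\gamma(y)) = (u_0, p_{\gamma\lambda}(r_\lambda(y)))$.

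Next I would check that $\Phi$ is surjective and identify its fibres. Surjectivity is immediate from surjectivity of each $r_\lambda$: given a class of $(u, y_\lambda) \in \lambda \times Y_\lambda$, pick $y \in Y$ with $r_\lambda(y) = y_\lambda$; if $u$ lies in $\lambda^\circ$ we are done, and if $u$ lies in a face $\gamma \prec \lambda$ one replaces $y_\lambda$ by its image $p_{\gamma\lambda}(y_\lambda) = r_\gamma(y)$, which represents the same class. For the fibres: $\Phi(u,y) = \Phi(u',y')$ forces $u = u'$ (the first coordinate is preserved) and then $r_{\lambda(u)}(y) = r_{\lambda(u)}(y')$, which is precisely the generating relation $\sim_r$; conversely this relation clearly maps into a single fibre. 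Hence $\Phi$ descends to a continuous bijection $\overline{\Phi}\colon (P\times Y)/{\sim_r} \to \left(\bigsqcup_\lambda \lambda\times Y_\lambda\right)/{\sim}$.

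Finally I would upgrade $\overline{\Phi}$ to a homeomorphism by a compactness argument. The source $(P \times Y)/{\sim_r}$ is compact, being a quotient of the compact space $P \times Y$ ($P$ is a compact polytope and $Y$ is compact by hypothesis). The target is Hausdorff: it is homeomorphic to $X/\TR$ by Theorem~\ref{thm:goresky-macpherson}, and $X/\TR$ is the quotient of the compact Hausdorff space $X$ by a compact group action, hence Hausdorff. A continuous bijection from a compact space to a Hausdorff space is a homeomorphism, so $\overline{\Phi}$ is one, and composing with the homeomorphism of Theorem~\ref{thm:goresky-macpherson} gives $X/\TR \approx (P\times Y)/{\sim_r}$.

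The main obstacle is the continuity of $\Phi$ at points lying in the boundary strata: one has to argue that, although the target coordinate $Y_{\lambda(u)}$ changes discontinuously with $u$, the \emph{class} in the glued space varies continuously, and this is exactly where properness of the $r_\lambda$ and the inverse-system compatibility enter. Everything else is formal, using only compactness of $P$ and $Y$ and the Hausdorffness of $X/\TR$. (Properness of the $r_\lambda$ also ensures that the relation $\sim_r$ has closed graph, which gives an alternative route to Hausdorffness of the source if one prefers to avoid invoking Theorem~\ref{thm:goresky-macpherson} for that purpose.)
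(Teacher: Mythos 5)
Your proposal is correct and follows essentially the same route as the paper: the same canonical map $(u,y)\mapsto[(u,r_{\lambda(u)}(y))]$ into the glued space of Theorem~\ref{thm:goresky-macpherson}, followed by the compact-to-Hausdorff continuous-bijection argument. You simply spell out the continuity check across strata and the identification of fibres, which the paper asserts without detail.
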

\begin{proof}
  There is a canonical map 
\[P \times Y \to \left(\bigsqcup_{\lambda \in \Lambda} \lambda \times Y_\lambda\right)/{\sim};\quad (u,y) \mapsto [(u,r_{\lambda(u)}(y)]\]
This map is surjective and continuous and indentifies exactly those pairs which are equivalent under $\sim_r$. The quotient $(P \times Y)/{\sim_r}$ is compact as $(P \times Y)$ is and by Theorem~\ref{thm:goresky-macpherson} the codomain of the map is homeomorphic to $X/T$, which is a Hausdorff space. Hence, the induced continuous bijection $(P \times Y)/{\sim_r} \to X/T$ is a homeomorphism.
\end{proof}


\begin{remark}
  In \cite{2018arXiv180206449B,2018arXiv180305766B} such a $Y$ is called a \emph{universal parameter space} for the $\TC$-obits.  In algebraic geometry a natural choice for such a dominating algebraic object $Y$ would be the inverse limit of the $Y_\lambda$ or the Chow quotient of $X$ by $\TC$, which can be identified with a distinguished irreducible component of this inverse limit.
\end{remark}

If the structure of the inverse system $\{Y_\lambda\}_{\lambda \in \Lambda}$ of GIT quotients is complicated Corollary~\ref{cor:orbit-space} might not give much concrete information about the orbit space $X/\TR$. However, in certain situations this structure turns out to be almost trivial allowing us to effectively calculate the orbit space.

\begin{definition}
  We say the $\TC$-action on $X \subset \PP^N_\CC$ has an \emph{almost trivial variation of GIT} if for $\lambda \not \subset \partial P$ the quotients $Y_\lambda$ are all isomorphic to some $Y$.
\end{definition}

\begin{example}
  If the torus action has complexity one than the quotients $Y_\lambda$ are smooth algebraic curves or just a point, where the latter happens at most over the boundary of $P$. The only contraction morphisms here are isomorphisms or the contraction of a curve to a point. Hence, the definition is automatically fulfilled.
\end{example}

\begin{proposition}
\label{prop:trivial-git}
  Consider a $\TC$-action on $X$ with almost trivial variation of GIT and only finitely many lower-dimensional $\TC$-orbits. Then $X/\TR$ is homeomorphic to the topological join $S^{k-1} \ast Y$.
\end{proposition}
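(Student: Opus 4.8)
The plan is to apply Corollary~\ref{cor:orbit-space} with the common quotient $Y$ furnished by the almost triviality as the dominating space; what I have to produce is a compatible family of proper surjections $r_\lambda\colon Y\to Y_\lambda$, and the real content is to see that these are isomorphisms over the interior $P^\circ$ and constant over the boundary $\partial P$. The action being effective (as is implicit in the statement), $P$ is $k$-dimensional and $(P,\partial P)\approx(D^k,S^{k-1})$; recall also that $Y$, a GIT quotient of a projective variety, is compact Hausdorff. Over the interior this is easy: if $\gamma\prec\lambda$ with $\gamma,\lambda\not\subseteq\partial P$, then $p_{\gamma\lambda}\colon Y_\lambda\to Y_\gamma$ is a proper morphism between the normal varieties $Y_\lambda\cong Y\cong Y_\gamma$, which have equal dimension; it restricts to an isomorphism over the dense open locus of orbits that remain closed and semistable across the wall, hence is birational, and being proper and surjective onto a normal target it is an isomorphism. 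Any two such isomorphisms agree on that common dense locus, so there is no monodromy around cycles in the poset $\{\lambda\not\subseteq\partial P\}$, whence $\varprojlim_{\lambda\not\subseteq\partial P}Y_\lambda\cong Y$ with isomorphic structure maps $r_\lambda\colon Y\xrightarrow{\sim}Y_\lambda$.

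The heart of the matter is the claim that $\mu^{-1}(F)$ is a toric variety of dimension $\dim F$ for every proper face $F$ of $P$, so that $\mu^{-1}(u)$ is a single $\TC$-orbit whenever $u\in\partial P$. Writing $F=P\cap H_1\cap\dots\cap H_s$ with the $H_i$ the facet hyperplanes through $F$ and $\rho_i\colon\CC^*\to\TC$ the associated one-parameter subgroups, the subvariety $\mu^{-1}(F)$ is the intersection of the maximal fixed loci $Z_i$ of the $\rho_i$; it is $\TC$-invariant, and the $s$-dimensional subtorus $S=\langle\rho_1,\dots,\rho_s\rangle$ acts trivially on it, so every orbit it contains has dimension at most $k-s=\dim F<k$ and hence is one of the finitely many lower-dimensional orbits. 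Thus $\mu^{-1}(F)$ is a finite union of $\TC$-orbits and $\dim\mu^{-1}(F)\le\dim F$; on the other hand it is connected — being obtained from $X$ by successively passing to maximal level sets of circle moment maps, each of which is connected — and since $X$ is smooth it is irreducible, hence has a dense $\TC$-orbit and is toric for $\TC/S$. As $\mu\big(\mu^{-1}(F)\big)=F$, its dimension equals $\dim F$, and for $u$ in the relative interior of $F$ the fibre $\mu^{-1}(u)$ is the dense torus orbit, so $Y_{\lambda(u)}=\mu^{-1}(u)/\TR$ is a point; note also that then $\lambda(u)\subseteq\partial P$, since $F$ itself occurs as $\Delta(x)=\mu(\overline{\TC.x})$ for $x$ in that dense orbit.

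It remains to assemble the pieces. Setting $r_\lambda\colon Y\to Y_\lambda=\{\pt\}$ equal to the constant map for $\lambda\subseteq\partial P$ turns $\{r_\lambda\}_{\lambda\in\Lambda}$ into a system of proper surjections compatible with all $p_{\gamma\lambda}$ (the only case to check is $p_{\gamma\lambda}\circ r_\lambda=r_\gamma$ with $\gamma\subseteq\partial P\prec\lambda$, where both sides are constant), so Corollary~\ref{cor:orbit-space} gives $X/\TR\approx(P\times Y)/{\sim_r}$. By the two previous paragraphs, for $u\in P^\circ$ we have $\lambda(u)\not\subseteq\partial P$, so $r_{\lambda(u)}$ is injective and $\sim_r$ identifies nothing in $\{u\}\times Y$, while for $u\in\partial P$ the map $r_{\lambda(u)}$ is constant and $\sim_r$ collapses $\{u\}\times Y$ to a point; hence $(P\times Y)/{\sim_r}$ is $P\times Y$ with the slice over each boundary point collapsed. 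Identifying $(P,\partial P)$ with $(D^k,S^{k-1})$, the assignment $(s,y,t)\mapsto[((1-t)s,y)]$ for $s\in S^{k-1}$, $y\in Y$, $t\in[0,1]$ descends to a continuous bijection from the join $S^{k-1}\ast Y$ onto this quotient, which is a homeomorphism as both sides are compact Hausdorff; therefore $X/\TR\approx S^{k-1}\ast Y$. The step I expect to require the most care is the toricity of $\mu^{-1}(F)$ in the middle paragraph, in particular its connectedness, which is where the smoothness of $X$ genuinely enters; the rest is bookkeeping with Corollary~\ref{cor:orbit-space} together with a standard model for the join.
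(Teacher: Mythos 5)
Your proposal is correct and takes essentially the same route as the paper: show that the boundary quotients are points and the interior quotients are all a fixed $Y$, feed this into Corollary~\ref{cor:orbit-space}, and identify the resulting quotient $(P\times Y)/{\sim_\partial}$ with the join via (the argument of) Lemma~\ref{lem:join}. You supply far more justification than the paper does for its two passing assertions --- that boundary moment fibres are single $\TR$-orbits and that the interior quotients assemble into a compatible system $r_\lambda\colon Y\to Y_\lambda$ --- and the only wrinkle is the step ``proper, surjective and birational onto a normal target, hence an isomorphism'', which is false as literally stated (a blow-up is a counterexample); it is rescued here by the additional fact that $Y_\lambda$ and $Y_\gamma$ are abstractly isomorphic normal projective varieties, or, in the paper's applications, by $Y$ being a curve.
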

\begin{proof}
  Having an almost trivial variation of GIT means that $Y_\lambda \cong Y$ for $\lambda \not \subset \partial P$. On the other hand, having only finitely many lower-dimensional $\TC$-orbits implies that the moment fibre of a boundary point $u \in \partial P$ consists of exactly one (lower-dimensional) orbit and therefore $Y_{\lambda(u)}$ is just a point. Hence, by Corollary~\ref{cor:orbit-space} we have
\[
X/\TR \approx (P \times Y)/{\sim_\partial},
\]
where the equivalence relation $\sim_{\partial}$ is generated by $(u,y) \sim_{\partial} (u,y')$ for $u \in \partial P$. Now, the claim follows from Lemma~\ref{lem:join} below.
\end{proof}

\begin{lemma}
\label{lem:join}
  Consider the closed unit disc $D^k$ and the unit sphere $S^{k-1}$. Then for any compact topological manifold $Y$  we have
\[S^{k-1} \ast Y \approx (D^k \times Y)/{\sim_\partial},\]
where the equivalence relation $\sim_{\partial}$ is generated by $(u,y) \sim_{\partial} (u,y')$ for $u \in \partial D^k$.
\end{lemma}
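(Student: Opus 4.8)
The idea is to exhibit both sides as quotients of the single space $S^{k-1}\times[0,1]\times Y$ and then to match up the identifications. Recall that the join $S^{k-1}\ast Y$ is, by definition, the quotient of $S^{k-1}\times Y\times[0,1]$ by the relation that collapses the slice at parameter $0$ along the $Y$-direction (identifying $(s,y,0)$ with $(s,y',0)$) and the slice at parameter $1$ along the $S^{k-1}$-direction (identifying $(s,y,1)$ with $(s',y,1)$). On the disc side I would use polar coordinates: the map $(s,r)\mapsto r\cdot s$ gives a continuous surjection $S^{k-1}\times[0,1]\to D^k$ which is injective except that it collapses $S^{k-1}\times\{0\}$ to the centre, and since the source is compact and the target Hausdorff it descends to a homeomorphism $D^k\approx(S^{k-1}\times[0,1])/(S^{k-1}\times\{0\})$; under it the boundary $\partial D^k$ is exactly the locus $r=1$.

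The key technical step is to commute the product by $Y$ past a quotient. I would invoke the standard fact that if $q\colon A\to B$ is a quotient map and $Y$ is locally compact Hausdorff, then $q\times\id_Y\colon A\times Y\to B\times Y$ is again a quotient map; here $Y$ is a compact manifold, so this applies. Applying it to the polar-coordinate quotient, and using that a composition of quotient maps is a quotient map, one obtains
\[
(D^k\times Y)/{\sim_\partial}\ \approx\ \bigl(S^{k-1}\times[0,1]\times Y\bigr)/{\sim},
\]
where $\sim$ is generated by $(s,0,y)\sim(s',0,y)$ for all $s,s'$ (coming from the collapse of $S^{k-1}\times\{0\}$ to the centre of the disc) together with $(s,1,y)\sim(s,1,y')$ for all $y,y'$ (coming from $\sim_\partial$ via the identification $\partial D^k=\{r=1\}$; note the boundary slices $\{u\}\times Y$ are pairwise disjoint, so $\sim_\partial$ collapses each of them to a point and nothing more).

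It then remains to compare this with the join. The homeomorphism $S^{k-1}\times[0,1]\times Y\to S^{k-1}\times Y\times[0,1]$, $(s,r,y)\mapsto(s,y,1-r)$, carries $\sim$ precisely onto the defining relation of $S^{k-1}\ast Y$, so it descends to the desired homeomorphism. Alternatively, once both spaces are presented as quotients of the compact space $S^{k-1}\times[0,1]\times Y$, it is enough to check that the two quotient maps induce the same partition; the induced continuous bijection between the resulting compact, Hausdorff quotients is then automatically a homeomorphism.

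I expect the only real subtlety to be the interchange of the quotient with the product by $Y$: this is exactly where (local) compactness of $Y$ is needed, and the statement would fail without such a hypothesis. The remaining work is the routine bookkeeping of three operations — collapsing $S^{k-1}\times\{0\}$ to a point, collapsing the boundary slices $\{u\}\times Y$ to points, and the reparametrisation $r\mapsto 1-r$.
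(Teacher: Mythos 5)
Your proof is correct, and it establishes the same underlying homeomorphism as the paper, but by a noticeably different verification strategy. The paper's proof is a one-liner: it simply writes down the map $(S^{k-1}\times Y\times[0,1])/{\sim}\to(D^k\times Y)/{\sim_\partial}$, $[(u,y,t)]\mapsto[(tu,y)]$, and leaves well-definedness, bijectivity and bicontinuity implicit (the last resting on the compact-to-Hausdorff argument used elsewhere in the paper, which tacitly requires knowing that $(D^k\times Y)/{\sim_\partial}$ is Hausdorff). You instead present both sides as quotients of the common space $S^{k-1}\times[0,1]\times Y$, using the polar-coordinate quotient $S^{k-1}\times[0,1]\to D^k$ and the fact that $q\times\id_Y$ remains a quotient map for $Y$ locally compact Hausdorff, and then match the two partitions. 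This buys you a proof in which Hausdorffness of the quotients never needs to be checked (two quotient maps with the same fibres induce a homeomorphism by the universal property), at the cost of invoking the product-quotient lemma --- which here is actually unnecessary, since $p\times\id_Y$ is a continuous surjection from a compact space to a Hausdorff space and hence closed, so it is automatically a quotient map. One cosmetic remark: the reparametrisation $r\mapsto 1-r$ in your argument arises only because you put the $Y$-collapse at parameter $0$ and the sphere-collapse at parameter $1$, the opposite of the paper's convention; with the paper's convention the map $[(u,y,t)]\mapsto[(tu,y)]$ needs no reversal.
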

\begin{proof}
  Recall that the join 
$S^{k-1} \ast Y$ is defined as $(S^{k-1} \times Y \times [0,1])/{\sim}$, with the equivalence relation being generated by $(s,y,0) \sim (s',y,0)$ and $(s,y,1) \sim (s,y',1)$. Now the homeomorphism is given by 
\[ (S^{k-1} \times Y \times [0,1])/{\sim} \longrightarrow (D^{k} \times Y)/{\sim_\partial} ,\quad [(u,y,t)] \mapsto [(tu,y)].\]
\end{proof}

As a special case of Lemma~\ref{lem:join} we may consider the situation when $Y \approx S^m$. Then Lemma~\ref{lem:join} implies $(D^k \times S^m)/{\sim_\partial} \approx S^{k-1} \ast S^m$, which is known to be homeomorphic to $S^{m+k}$. The lemma below gives a slightly more general statement.

\begin{lemma}
\label{lem:holed-spheres}
  For any closed $H \subset \RR^k$ we have
  \[((D^k \cap H) \times S^m)/{\sim_\partial} \;\approx\; S^{k+m} \cap (H\times \RR^m)\]
  where the equivalence relation  $\sim_{\partial}$ is generated by  $(u,y) \sim_{\partial} (u,y')$ for $u \in \partial D^k$.
\end{lemma}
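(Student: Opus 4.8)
The plan is to refine the homeomorphism $(D^k\times S^m)/{\sim_\partial}\approx S^{k+m}$ — which is the special case $Y=S^m$ of Lemma~\ref{lem:join} combined with $S^{k-1}\ast S^m\approx S^{k+m}$ — by writing down an explicit model that keeps track of the $D^k$-coordinate. Realise $S^m$ as the unit sphere of $\RR^{m+1}$ and $S^{k+m}$ as the unit sphere of $\RR^{k+m+1}=\RR^k\times\RR^{m+1}$, and consider
\[\psi\colon (D^k\cap H)\times S^m\longrightarrow \RR^k\times\RR^{m+1};\qquad (u,y)\mapsto\bigl(u,\sqrt{1-|u|^2}\,y\bigr).\]
Since $|y|=1$ we get $|u|^2+(1-|u|^2)|y|^2=1$, so $\psi$ lands in $S^{k+m}$, and in fact its image is exactly $S^{k+m}\cap(H\times\RR^{m+1})$: a point $(a,b)$ on $S^{k+m}$ with $a\in H$ automatically satisfies $|a|\le 1$, hence $a\in D^k\cap H$, and $(a,b)=\psi(a,b/|b|)$ when $|a|<1$, while $(a,b)=(a,0)=\psi(a,y)$ for an arbitrary $y\in S^m$ when $|a|=1$.

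Next I would determine the fibres of $\psi$. From the formula, $\psi(u,y)=\psi(u',y')$ forces $u=u'$ and $\sqrt{1-|u|^2}\,y=\sqrt{1-|u|^2}\,y'$, which gives $y=y'$ unless $|u|=1$; conversely all points of $\{u\}\times S^m$ with $u\in\partial D^k$ are sent to $(u,0)$. Hence $\psi$ collapses precisely the pairs identified by $\sim_\partial$ (recall that $\sim_\partial$ is the equivalence relation generated by identifying points lying over $u\in\partial D^k$, that is, here over $u\in H\cap S^{k-1}$), and it therefore descends to a continuous bijection
\[\bar\psi\colon\bigl((D^k\cap H)\times S^m\bigr)/{\sim_\partial}\longrightarrow S^{k+m}\cap(H\times\RR^{m+1}).\]

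To finish I would invoke compactness: as $H$ is closed, $D^k\cap H$ is a closed subspace of the compact disc $D^k$, hence compact, so the quotient on the left is compact; the space on the right is Hausdorff as a subspace of $S^{k+m}$. A continuous bijection from a compact space to a Hausdorff space is a homeomorphism, which concludes the argument. I do not expect a genuine obstacle here; the only points needing a moment of care are the continuity of $u\mapsto\sqrt{1-|u|^2}$ up to the boundary sphere — it is continuous on all of $D^k$ and vanishes on $\partial D^k$, which is exactly what makes $\psi$ continuous and forces the collapse of the boundary fibres — and the bookkeeping that the relevant part of $\partial D^k$ meeting $D^k\cap H$ is $H\cap S^{k-1}$. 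Setting $H=\RR^k$ recovers the homeomorphism $(D^k\times S^m)/{\sim_\partial}\approx S^{k+m}$ used right above the lemma.
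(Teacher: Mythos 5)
Your proposal is correct and is essentially identical to the paper's own proof: the same explicit map $(u,y)\mapsto(u,\sqrt{1-|u|^2}\,y)$, the same observation that it collapses exactly the boundary fibres, and the same compact-to-Hausdorff argument to upgrade the continuous bijection to a homeomorphism. Your version just spells out the surjectivity and fibre computations (and correctly notes the ambient factor should be $\RR^{m+1}$) in more detail than the paper does.
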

\begin{proof}
  We can state the homeomorphism explicitely
  \[
  ((D^k\cap H) \times S^m)/{\sim_\partial} \to S^{k+m} \cap H \times \RR^m, \quad [(u,y)] \mapsto
  (u,\sqrt{1-|u|^2}\cdot y).
  \]
  For every $(u,y) \in (\partial D^k \cap H) \times S^m$ we have $(u,\sqrt{1-|u|^2}\cdot y)=(u,0)$. Hence, the map is a well-defined continuous bijection from a compact space to a Hausdorff space and, therefore a homeomorphism.
\end{proof}

\section{Oriented Grassmanians of planes as $\TC$-varieties}
\label{sec:orient-grassm}
We consider the smooth manifold $\GR^+(2,n)$ parametrising oriented planes in $\RR^n$. An oriented plane is given by an orthonormal basis $(v_1,v_2)$. Another orthonormal pair $(v_1',v_2')$ gives rise to the same oriented plane if and only if
$(v_1',v_2') = (v_1,v_2)Q_\phi$  with
\[
Q_\phi = \begin{pmatrix}
  \cos \phi & -\sin \phi\\
  \sin \phi & \cos \phi
\end{pmatrix} \in \SO_2.
\]
Hence,
\[\GR^+(2,n) = \{(v_1,v_2) \in \RR^{n\times 2} \mid \langle v_i,v_j \rangle = \delta_{ij}\}/\SO_2.\]
A (compact) torus action on $\GR^+(2,n)$ is induced by the choice of a maximal torus in $\SO_n$ via its action on the pair $(v_1,v_2)$. A maximal torus $\TR$ is given by block diagonal matrices of the form
$\diag(Q_{\phi_1}, \ldots, Q_{\phi_k})$ in the case $n=2k$ or $\diag(Q_{\phi_1}, \ldots, Q_{\phi_k},1)$ in the case $n=2k+1$. In the even-dimensional case the induced action on the oriented planes is not effective as $-I$ acts trivially. To obtain an effective action one has to pass to the quotient $\TR/\langle \pm I \rangle$. However, this does not effect the orbit structure of the action.

It is well-known that the oriented Grassmannian of planes can be identified with the underlying smooth manifold of the complex smooth quadric $Q^{n-2}$ in $\PP^{n-1}_\CC$, see e.g. \cite[p. 280]{zbMATH03280665}. Indeed, the map
\begin{align*}
  \Psi \colon \RR^{n\times 2} \to \CC^n; \quad (v_1,v_2) \mapsto w = v_1 + i\cdot v_2
\end{align*}
 induces an embedding $\bar \Psi \colon \GR^+(2,n) \hookrightarrow \PP^n_\CC$.
This is well-defined as $\Psi((v_1,v_2)Q_\phi) = e^{i\phi}\cdot \Psi(v_1,v_2)$. Moreover the condition
$\langle v_1 , v_2 \rangle = 0$ is equivalent to $\Im(\sum_j w_j^2)=0$ and
$|v_1|^2/|v_2|^2 = 1$ is equivalent to $\Re(\sum_j w_j^2)=0$. Hence, the image of the embedding in $\PP^n_\CC$ is cut out by the equation $\sum_j w_j^2 = 0$. A change of coordinates 
\begin{align*}
  z_{2j-1} = w_{2j-1}+ i\cdot w_{2j}; \quad z_{2j} = w_{2j-1}-i\cdot w_{2j} \quad \text{ for } j=1,\ldots,k
\end{align*}
in the case $n=2k$ and additionally $z_n=w_{n}$ in the case $n=2k+1$ leads to the equivalent equation
\begin{equation}
\sum_{j=1}^k z_{2j-1}z_{2j} = 0 \label{eq:even}
\end{equation}
or 
\begin{equation}
z_n^2 + \sum_{j=1}^k z_{2j-1}z_{2j}=0,\label{eq:odd}
\end{equation}
respectively. Now in these coordinates one easily checks that for an oriented plane $E \in \GR^+(2,n)$ with
\[\bar \Psi(E) = (z_1: \ldots :z_n)\]
 we have
\[\bar \Psi(\diag(Q_{\phi_1}, \ldots, Q_{\phi_k})E)
= \left(e^{i\phi_1} z_1: e^{-i\phi_1} z_2:\ldots : e^{i\phi_k} z_{2k-1}: e^{-i\phi_1} z_{2k}\right)
\]
in the case $n=2k$ and similarly
\[\bar \Psi(\diag(Q_{\phi_1}, \ldots, Q_{\phi_k},1)E)
= \left(e^{i\phi_1} z_1: e^{-i\phi_1} z_2:\ldots : e^{i\phi_k} z_{2k-1}: e^{-i\phi_1} z_{2k}:z_{2k+1}\right)
\]
in the case $n=2k+1$. Let $e_j$ denote the $j$th canonical basis vector of $\ZZ^k$. Then the action of $\TR=(S^1)^k$ above is induced by an algebraic torus action of $\TC=(\CC^*)^k$ with weights 
\[\deg(z_{2j-1})=e_j, \; \deg(z_{2j})=-e_j; \quad j=1,\ldots,k.\]
and $\deg(z_{2k+1})=0$ in the case $n=2k+1$.

We are now going to describe the GIT quotients in order to eventually construct the orbit space using Corollary~\ref{cor:orbit-space}. We only describe the case of even $n=2k$ in detail. The situation for $n$ odd is very similar.
 
The moment map is given by
\begin{equation}
\mu(z_1: \ldots : z_n)=\frac{1}{\sum_{i=1}^n |z_i|^2}\sum_{j=1}^{k} (|z_{2j-1}|^2 - |z_{2j}|^2)e_j.\label{eq:momentmap}
\end{equation}
The moment image of $X$ is the cross-polytope given as the convex hull of the weights $P=\beta_k =\conv(\pm e_1, \ldots, \pm e_k)$. The fixed point $(1:0:\ldots:0)$ is mapped to $e_1$, $(0:1:\ldots:0)$ to $-e_1$ and similarly for the other coordinates.

\begin{remark}
\label{rem:cross-polytope-boundary}
  The proper faces of the cross-polytope $P$ are exactly the convex hulls of subsets of $\{\pm e_1, \ldots, \pm e_k\}$ where for every $j$ at most one of $e_j$ and $-e_j$ is contained.
\end{remark}



\begin{lemma}
  \label{lem:boundary-preimages}
    The moment preimage of a boundary point $u \in \partial P$
    consists of exactly one $\TR$-orbit. Hence, the quotient $Y_{\lambda(u)}$ is just a single point.
\end{lemma}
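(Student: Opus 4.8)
The plan is to make the claim about $\partial P$ concrete by using the explicit description of the faces of the cross-polytope in Remark~\ref{rem:cross-polytope-boundary} together with the explicit formula for the moment map in \eqref{eq:momentmap}. First I would fix a boundary point $u \in \partial P$ and let $\tau$ be the smallest face of $P$ containing $u$ in its relative interior. By Remark~\ref{rem:cross-polytope-boundary} this face is the convex hull of a set $S \subseteq \{\pm e_1,\dots,\pm e_k\}$ containing at most one of $\pm e_j$ for each $j$; write $J \subseteq \{1,\dots,k\}$ for the set of indices $j$ with $e_j \in S$ or $-e_j \in S$, and for $j \in J$ let $\epsilon_j \in \{+1,-1\}$ record which of the two is present. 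Since $u$ lies in the relative interior of $\tau$, its $e_j$-coordinate $u_j$ satisfies $\epsilon_j u_j > 0$ for $j \in J$ and $u_j = 0$ for $j \notin J$.

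The key step is to show that $\mu^{-1}(u)$ consists of a single $\TR$-orbit. Take a point $(z_1:\dots:z_n) \in \mu^{-1}(u)$. For $j \notin J$ the condition $u_j = 0$ forces $|z_{2j-1}| = |z_{2j}|$; combined with the quadric equation \eqref{eq:even}, namely $\sum_j z_{2j-1}z_{2j} = 0$, and the fact that for $j \in J$ one of $|z_{2j-1}|^2 - |z_{2j}|^2$ is forced to have a definite sign (I would normalise so that $\sum_i |z_i|^2 = 1$ and read off from \eqref{eq:momentmap} that $|z_{2j-1}|^2 - |z_{2j}|^2 = u_j$), I expect to deduce that for $j \in J$ exactly one of $z_{2j-1}, z_{2j}$ is nonzero — namely $z_{2j-1} \neq 0$, $z_{2j} = 0$ if $\epsilon_j = +1$, and vice versa — while for $j \notin J$ both coordinates must vanish. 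The argument for $j \notin J$ is the delicate point: the moment condition only gives $|z_{2j-1}| = |z_{2j}|$, so one must invoke the quadric relation to rule out both being nonzero. Here I would use that the terms $z_{2j-1}z_{2j}$ for $j \in J$ all vanish (one factor is zero), so \eqref{eq:even} reduces to $\sum_{j \notin J} z_{2j-1}z_{2j} = 0$; one then checks this is incompatible with $|z_{2j-1}| = |z_{2j}| \neq 0$ for some such $j$ unless the moment point were pushed off $\tau$ — more carefully, one uses that $u$ being in the \emph{relative interior} of the minimal face $\tau$ means no further coordinates can vanish, i.e. the only semistable points over $u$ in the sense of the stratification have support exactly on the coordinates indexed by $J$. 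In the odd case \eqref{eq:odd} one treats $z_n$ as an extra coordinate of weight $0$ which is likewise forced to vanish when $u \in \partial P$.

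Once the support is pinned down to $\{z_{2j-1} : j \in J, \epsilon_j = +1\} \cup \{z_{2j} : j \in J, \epsilon_j = -1\}$, the remaining $|J|$ nonzero homogeneous coordinates are all acted on with pairwise independent nonzero weights $\pm e_j$, so up to the projective scaling the $\TC$-action, and already the compact $\TR$-action on the moduli of these coordinates, is transitive on the relevant moment fibre; hence $\mu^{-1}(u)$ is a single $\TR$-orbit. It follows that $\Xps_{\lambda(u)}$ is a single closed $\TC$-orbit, so the GIT quotient $Y_{\lambda(u)} = \Xss_{\lambda(u)}/\!\!/\TC$ is a point, as claimed. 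I would present the even case in full and remark that the odd case is identical after carrying the extra weight-zero coordinate $z_n$, which is treated exactly like an index $j \notin J$.

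The main obstacle I anticipate is the bookkeeping in the step ruling out nonzero coordinates for $j \notin J$: one has to argue cleanly that the quadric equation together with the equalities $|z_{2j-1}| = |z_{2j}|$ and the minimality of the face $\tau$ forces these coordinates to vanish, rather than merely constraining their phases. A clean way to organise this is to observe that the set of coordinates that can be simultaneously nonzero on $\mu^{-1}(u)$ is governed by which monomial weights $\pm e_j$ can appear with $u$ in the interior of the convex hull of the corresponding weight set intersected with the quadric, i.e. by the stratification polytope $\lambda(u)$ itself; since $u \in \partial P$ and the only faces of $P$ that are themselves contained in $\partial P$ are the faces listed in Remark~\ref{rem:cross-polytope-boundary}, one gets $\lambda(u) = \tau$ and the support is determined. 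I would phrase the final writeup around this observation to keep the casework short.
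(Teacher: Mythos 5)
Your overall strategy---first pin down which homogeneous coordinates can be nonzero over $u\in\partial P$, then observe that the surviving coordinates differ only by phases on which $\TR$ acts transitively through the independent weights $\pm e_j$---is the same as the paper's, and the second half of your argument is sound. The gap is in the step eliminating the coordinates with $j\notin J$. You propose to deduce $z_{2j-1}=z_{2j}=0$ from the quadric relation $\sum_{j\notin J}z_{2j-1}z_{2j}=0$ together with $|z_{2j-1}|=|z_{2j}|$, but this implication is false as soon as two indices outside $J$ are involved: for instance $z_3z_4=1$ and $z_5z_6=-1$ with $|z_3|=|z_4|=|z_5|=|z_6|=1$ satisfies both constraints with all four coordinates nonzero. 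The reason such configurations cannot occur in $\mu^{-1}(u)$ for $u\in\partial P$ has nothing to do with the quadric equation (which plays no role in this lemma at all); it is a purely convex-geometric consequence of the moment map formula \eqref{eq:momentmap}. The paper makes this precise by writing $u$ in barycentric coordinates of a facet $\conv(\sigma_1e_1,\dots,\sigma_ke_k)$ and observing that $u\in\partial P$ forces
$\sum_{j}\bigl||z_{2j-1}|^2-|z_{2j}|^2\bigr|=\sum_i|z_i|^2$,
whence $z_{2j-1}z_{2j}=0$ for \emph{every} $j$ (and $z_n=0$ in the odd case); combined with $|z_{2j-1}|=|z_{2j}|$ for $j\notin J$ this kills both coordinates, and the rest of your argument then goes through.

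Your closing remark does contain a correct repair, though it is stated too vaguely to count as a proof: since $\mu(\TC.x)=\Delta^{\!\circ}(x)$ and $\Delta(x)$ is the convex hull of the weights of the nonzero coordinates of $x$, a boundary point $u$ lies in a proper face $\tau=P\cap H$ for some supporting hyperplane $H$, and $u\in\Delta^{\!\circ}(x)$ forces $\Delta(x)\subseteq\tau$; by Remark~\ref{rem:cross-polytope-boundary} the vertex set of $\tau$ contains at most one of $\pm e_j$ for each $j$ and never $0$, which pins down the support exactly as you claim. If you take this route, drop the appeal to the quadric entirely and spell out the supporting-hyperplane step instead of the phrase ``governed by the stratification polytope $\lambda(u)$,'' which as written reads as circular.
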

\begin{proof}
  It follows from Remark~\ref{rem:cross-polytope-boundary} that a point $(z_1: \ldots: z_n) \in X$ is mapped to the boundary of $P$ if and only if all the products $z_{2j-1}z_{2j}$ vanish. Indeed, assume $\mu(z)$ lies in the convex hull $\conv(\sigma_1e_1, \ldots,\sigma_ne_n)$, where $\sigma_i \in \{-1,1\}$ for $i=1,\ldots,n$. Then the coefficients of $\mu(z)$ in the corresponding barycentric coordinates are up to sign the same coefficients as in (\ref{eq:momentmap}). Hence, we must have that
  \begin{equation*}
    \frac{\sum_{j=1}^{k} \left||z_{2j-1}|^2 - |z_{2j}|^2\right|}{\sum_{i=1}^n |z_i|^2} = 1
  \end{equation*}
  or equivalently
    \begin{equation*}
    \sum_{j=1}^{k} \left||z_{2j-1}|^2 - |z_{2j}|^2\right| = \sum_{j=1}^{k} \left(|z_{2j-1}|^2 + |z_{2j}|^2\right).
  \end{equation*}
  This implies that for every $j=1,\ldots,k$ either $z_{2j-1}=0$ or $z_{2j}=0$ .

  Now, assume $z=(z_1:\ldots: z_n)$ and  $z'=(z_1':\ldots: z_n')$ have the same moment image and that  $z_{2j-1}z_{2j} = z'_{2j-1}z_{2j}' = 0$, for $j=1,\ldots k$. By choosing a suitable representative of the homogeneous coordinates we may assume that $\sum_j |z_j|^2 =\sum_j |z_j'|^2 = 1$. 
  With these choice of homogenous coordinates $|z_{2j-1}|^2-|z_{2j}|^2 = |z_{2j-1}|^2-|z_{2j}|^2$ holds for $j=1,\ldots k$, since $\mu(z)=\mu(z')$. For sign reasons we have either $z_{2j-1}=z_{2j-1}'=0$ or $z_{2j}=z_{2j}'=0$. This implies
   $|z_{2j}|=|z_{2j}'|$ or  $|z_{2j-1}|=|z_{2j-1}'|$, respectively. In either case we have $(z_{2j-1}',z_{2j}') = (s_j \cdot z_{2j-1},s^{-1}_j \cdot z_{2j})$ for some element $s_j \in S^1 \subset \CC^*$. Hence, $z$ and $z'$ lie in the same $\TR$-orbit.
\end{proof}

We consider the rational map 
\[q \colon \PP^{n-1}_\CC \dashrightarrow \PP^{k-2}_\CC, \quad (z_1:\ldots:z_n) \mapsto (z_3z_4:\ldots:z_{n-1}z_n).\]
This map is easily seen to be invariant under the $\TC$-action. It is well-defined on the locus of points where at least one of the products $z_{2j-1}z_{2j}$ for $j=2,\ldots,k$ does not vanish. For a point $z \in X$ this is equivalent to the fact that $\mu(z) \in P^\circ$.

\begin{lemma}
\label{lem:q-quotient-map}
  For $u \in P^\circ$ the map $q|_{\mu^{-1}(u)} \colon \mu^{-1}(u) \to \PP^{k-2}_\CC$ is a quotient map to the $\TR$-orbit space of the fibre.
\end{lemma}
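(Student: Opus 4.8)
The plan is to check that $f := q|_{\mu^{-1}(u)}$ is a continuous, $\TR$-invariant and surjective map whose fibres are exactly the $\TR$-orbits, and then to invoke a soft compactness argument. Continuity and $\TR$-invariance come for free: since $u \in P^\circ$ the fibre $\mu^{-1}(u)$ is contained in the locus on which $q$ is a morphism (as recalled just before the statement), and $q$ is $\TC$-invariant, hence $\TR$-invariant. Furthermore $\mu^{-1}(u)$ is compact, being closed in the projective variety $X$, whereas $\PP^{k-2}_\CC$ is Hausdorff. Hence, once surjectivity and the identification of the fibres with orbits are established, $f$ induces a continuous bijection $\mu^{-1}(u)/\TR \to \PP^{k-2}_\CC$ from a compact space onto a Hausdorff space, which is automatically a homeomorphism; that is precisely the statement that $f$ is a quotient map onto the $\TR$-orbit space.

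For surjectivity I would write down an explicit preimage. Put $u = \sum_{j=1}^k u_j e_j$, so $\sum_j|u_j| < 1$ since $u$ lies in the interior of the cross-polytope $P$. Given $[a_2:\dots:a_k] \in \PP^{k-2}_\CC$, set $A := \sum_{j=2}^k a_j$, and for a real parameter $\epsilon > 0$ prescribe the products $z_{2j-1}z_{2j} = \epsilon a_j$ for $j = 2,\dots,k$ and $z_1 z_2 = -\epsilon A$ (so that $\sum_{j=1}^k z_{2j-1}z_{2j} = 0$), together with the unique non-negative moduli determined by $|z_{2j-1}|^2 - |z_{2j}|^2 = u_j$ and $|z_{2j-1}|^2|z_{2j}|^2 = \epsilon^2|a_j|^2$ (resp.\ $\epsilon^2|A|^2$ for $j = 1$). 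The total $\sum_i|z_i(\epsilon)|^2 = \sqrt{u_1^2 + 4\epsilon^2|A|^2} + \sum_{j=2}^k\sqrt{u_j^2 + 4\epsilon^2|a_j|^2}$ depends continuously on $\epsilon$, equals $\sum_j|u_j| < 1$ at $\epsilon = 0$ and tends to $+\infty$; so for a suitable $\epsilon_0$ it equals $1$, and then $z(\epsilon_0) \in \mu^{-1}(u)$ with $q(z(\epsilon_0)) = [a_2:\dots:a_k]$.

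The core of the argument — and the step I expect to be the main obstacle — is to show that $q(z) = q(z')$ for $z, z' \in \mu^{-1}(u)$ forces $z$ and $z'$ into a single $\TR$-orbit. Equality of the images in $\PP^{k-2}_\CC$ yields a scalar $c \in \CC^*$ with $z_{2j-1}z_{2j} = c\,z'_{2j-1}z'_{2j}$ for $j = 2,\dots,k$, and the quadric relation $z_1z_2 = -\sum_{j\ge2}z_{2j-1}z_{2j}$ extends this identity to $j = 1$. Normalising both representatives so that $\sum_i|z_i|^2 = \sum_i|z'_i|^2 = 1$, the condition $\mu(z) = \mu(z') = u$ reads $|z_{2j-1}|^2 - |z_{2j}|^2 = u_j = |z'_{2j-1}|^2 - |z'_{2j}|^2$; feeding this together with $|z_{2j-1}z_{2j}|^2 = |c|^2|z'_{2j-1}z'_{2j}|^2$ into the identity $|z_{2j-1}|^2 + |z_{2j}|^2 = \sqrt{u_j^2 + 4|z_{2j-1}z_{2j}|^2}$ and summing over $j$ shows that the function $\tau \mapsto \sum_{j=1}^k\sqrt{u_j^2 + 4\tau\,|z'_{2j-1}z'_{2j}|^2}$ takes the value $1$ both at $\tau = 1$ and at $\tau = |c|^2$. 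Since $u \in P^\circ$, not all of the products $z'_{2j-1}z'_{2j}$ vanish, so this function is strictly increasing and therefore $|c| = 1$.

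Once $|c| = 1$, the system ``difference $u_j$, product $|z_{2j-1}z_{2j}|^2 = |z'_{2j-1}z'_{2j}|^2$'' forces $|z_{2j-1}| = |z'_{2j-1}|$ and $|z_{2j}| = |z'_{2j}|$ for every $j$. Writing $c = e^{i\gamma}$ and replacing $z'$ by the scalar multiple $e^{i\gamma/2}z'$ (which represents the same point) makes the products agree exactly, $z_{2j-1}z_{2j} = z'_{2j-1}z'_{2j}$, without disturbing the moduli; and then, for each $j$, a short case distinction according to which (if any) of $z_{2j-1}, z_{2j}$ vanishes produces some $t_j \in S^1$ with $t_j z_{2j-1} = z'_{2j-1}$ and $t_j^{-1}z_{2j} = z'_{2j}$. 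Thus $t = (t_1,\dots,t_k) \in \TR$ carries $z$ to $z'$, which finishes the identification of fibres with orbits. All other ingredients — continuity, $\TR$-invariance, the explicit preimage, and the final homeomorphism — are routine; the only genuinely delicate point is the monotonicity argument pinning down $|c| = 1$ together with the ensuing bookkeeping of moduli and phases.
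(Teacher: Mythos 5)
Your proof is correct. The overall shape --- showing that the fibres of $q|_{\mu^{-1}(u)}$ are exactly the $\TR$-orbits and then concluding by a compact-to-Hausdorff argument --- matches the paper, but the crucial modulus computation is executed differently. The paper normalises the homogeneous coordinates so that the products $z_{2j-1}z_{2j}$ and $z'_{2j-1}z'_{2j}$ agree exactly, writes down the element $s\in\TC$ with $s.z=z'$, and proves $s\in\TR$ by singling out the index where $|s_j|^{\pm1}$ is maximal and running a sign analysis on the identity $(N'/N-|s_1|^2)|z_1|^2=(N'/N-|s_1|^{-2})|z_2|^2$. You instead normalise $\sum_i|z_i|^2=\sum_i|z_i'|^2=1$, so the products only agree up to a scalar $c$, and pin down $|c|=1$ via the identity $|z_{2j-1}|^2+|z_{2j}|^2=\sqrt{u_j^2+4|z_{2j-1}z_{2j}|^2}$ together with the strict monotonicity of $\tau\mapsto\sum_j\sqrt{u_j^2+4\tau\,|z'_{2j-1}z'_{2j}|^2}$ (strict because $u\in P^\circ$ forces some product to be nonzero); the equality of moduli and the choice of phases are then forced. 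This cleanly replaces the extremal-coordinate trick and avoids the ``w.l.o.g.'' step, at the cost of the square-root identity. Your write-up also makes explicit two points the paper leaves tacit: surjectivity of $q|_{\mu^{-1}(u)}$, via the one-parameter family $z(\epsilon)$ and the intermediate value theorem, and the final observation that a continuous bijection from the compact space $\mu^{-1}(u)/\TR$ onto the Hausdorff space $\PP^{k-2}_\CC$ is automatically a homeomorphism. Both routes are elementary and of comparable length.
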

\begin{proof}
  Consider $z,z' \in X$ with $\mu(z)=\mu(z') \in P^{\circ}$ and $q(z)=q(z')$.
  By choosing a suitable representative of the homogeneous coordinates for $z$ and $z'$ we may assume that $z_{2j-1}z_{2j}=z'_{2j-1}z'_{2j}$ for $j=2,\ldots,k-1$.  Then the defining equation of $X\subset \PP^{n-1}_\CC$ implies $\sum_j z_{2j-1}z_{2j}=\sum_j z'_{2j-1}z'_{2j}=0$. Hence, also $z_1z_2=z_1'z_2'$ must hold.
  Let us set $N=\sum_i |z_i|^2$ and  $N'=\sum_i |z_i'|^2$.
  Assume $z_{2j-1}z_{2j} = 0$ then $\mu(z)=\mu(z')$ implies
  \[\frac{|z_{2j-1}|^2-|z_{2j}|^2}{N} = \frac{|z_{2j-1}|^2-|z_{2j}|^2}{N'}.\]
  Hence, for sign reasons we have $z_{2j-1}=z'_{2j-1}=0$ or $z_{2j}=z'_{2j}=0$ in this case. Now, for each $j=1, \ldots k$ we set
  $s_j=z'_{2j-1}/z_{2j-1}$ or $s_j=z_{2j}/z'_{2j}$ whichever is defined.
  If they are both defined they have to coincide, since $z_{2j-1}z_{2j}=z'_{2j-1}z'_{2j}$. If $z_{2j-1}=z'_{2j-1}=z_{2j}=z'_{2j}=0$, then we set $s_j=1$. By these choices we have $s.z=z'$ with $s=(s_1,\ldots,s_k) \in \TC$. It remains to show, that $s \in \TR \subset \TC$.

  W.l.o.g we may assume that 
  \begin{equation}
    |s_1| = \max \{|s_1|, |s_1^{-1}| ,\ldots, |s_k|, |s_k^{-1}|\}.
    \label{eq:max}
\end{equation}
  The condition $\mu(z)=\mu(z')$  implies
   \[\frac{|z_1|-|z_2|}{N} = \frac{|z_1'|-|z_2'|}{N'},\]
   Note, that by (\ref{eq:max}) we have $|s_1|^{-2} \leq \frac{N'}{N} \leq |s_1|^2$. Now, from $(z'_1,z_2')=(s_1z_1,s_1^{-1}z_2)$ we obtain
  \[\frac{|z_1|^2-|z_2|^2}{N} = \frac{|s_1|^2|z_1|^2-|s_1|^{-2}|z_2|^2}{N'},\]
  which implies \(({N'}/{N}-|s_1|^2)|z_1|^2 = ({N'}/{N} - |s_1|^{-2})|z_2|^2.\)
  For sign reasons this is only possible if $|s_1|^2=N'/N=1$.
  Now, it follows from  (\ref{eq:max}) that $|s_1|= \ldots = |s_k|=1$ and $s \in \TR$.
\end{proof}

Since $\Xss(u)$ consists exactly of the orbits whose closures intersect $\mu^{-1}(u)$ it follows also that $q|_{\Xss(u)}$ is a good quotient in the sense of Geometric Invariant Theory. Hence, it coincides with the GIT quotient.

\begin{proof}[Proof of Theorem~\ref{thm:grassmannian}]
  We use Corollary~\ref{cor:orbit-space}. Here, $Y$ is just given as $Y=\PP^{\lceil n/2\rceil-2}$ and by Lemma \ref{lem:boundary-preimages} we have $r_{\lambda(u)}\colon Y \to \{\pt\}$ for $u \in \partial P$ and by \ref{lem:q-quotient-map} $r_{\lambda(u)} =\id_{Y}$. In particular, the equivalent relation on $P \times Y$ is just given by $(u,y) \sim_{r} (u,y')$ for $u \in \partial P$. Now, the claim follows from Lemma~\ref{lem:join}.
\end{proof}

We conclude this section by studying the moment images $\Delta(x)$ of $\TC$-orbit closures and the induced subdivision of $P$ from Section~\ref{sec:t-varieties}. For $n=2k+1$ the convex hull of every subset of vertices of $P$ occurs as a moment image. For $n=2k$ such convex hulls are moment images if an only if they are faces of $P$ or contain at least two pairs of opposite vertices $\{e_i,-e_i\}$, $\{e_j,-e_j\}$.
Indeed, for every such polytope $\Delta$ a corresponding $\TC$-orbit is given by
$\TC\cdot(z_1:z_2: \ldots :z_n)$ with $z_{2j-1} \neq 0 \Leftrightarrow e_j \in \Delta$ and $z_{2j} \neq 0 \Leftrightarrow -e_i \in \Delta$ for $j=1, \ldots, k$. Note, that for $n=2k+1$ such $(z_1:z_2: \ldots: z_n)$ fulfilling (\ref{eq:odd}) always exist, but for $n=2k$ there is obviously no non-trivial solution of  (\ref{eq:even}) where all but one monomial vanish.
In both cases, with the exception of $k=2$, the induced subdivision of $P$ is the same and coincides with the stellar subdivision of $P$ obtained by starring in the origin.

\begin{remark}
In \cite{2018arXiv180305766B} Buchstaber and Terzi\'c introduced the notion of $(2n,k)$-manifold. It's relatively straightforward to check that for $n=2k$, the axioms of this notion are indeed fulfilled for the associated effective torus action by $\TC/\langle \pm 1\rangle$. However, in the odd case on a generic point of the hyperplane section $[z_n=0]$ we have finite stabilisers of order $2$, which violates the conditions for a  $(2n,k)$-manifold.
\end{remark}

\begin{remark}
  Note that $\GR^+(2,6)$ can be identified with $\GC(2,4)$ as both are given by the smooth quadric hypersurface in $\PP^5_\CC$. Hence, for this case we just rediscover the results of \cite{zbMATH06783273}. Combinatorially this fact is reflected by  coincidence of the moment polytopes, i.e. the cross-polytope $\beta_3$ and the hypersimplex $\Delta_{4,2}$.
\end{remark}
\section{Complexity-one $\TC$-varieties}
\label{sec:complexity-one}
If the complexity of the torus action is $1$ the possible GIT quotients $Y_\lambda$ are either single points or isomorphic to a fixed algebraic curve $Y$. Hence, when applying Corollary~\ref{cor:orbit-space} to this situation the maps $p_\lambda \colon Y \to Y_\lambda$ are either isomorphisms or contractions to a point.
Our main aim in this section is to prove that in this situation the resulting orbits spaces are topological manifolds with boundary and even spheres if the number of lower dimensional $\TC$-orbits is finite.

\begin{remark}
  In the toric case the orbit space can be identified with the moment polytope. In particular, it is also a topological manifold with boundary. Hence, Theorem~\ref{thm:cplx-1} can be seen as generalisation of this fact. On the other hand, this phenomenon is very special to complexity $0$ and $1$. In higher dimensions this will almost never be the case. For example for a smooth projective variety $Y$ the join $S^n \ast Y$, which occurs as an orbit space in the situation of Proposition~\ref{prop:trivial-git}, is a topological manifold if and only if $Y \cong \PP^1_\CC$.
\end{remark}

\begin{proposition}
  \label{prop:projective-space-cplx-1}
  Consider the projective $d$-space with a $(d-1)$-torus $\TC$ acting effectively by weights $u_0=0, u_1, \ldots, u_d \in \ZZ^{d-1}$ on the coordinates $z_0,\ldots, z_d$. Then the orbit space $\PP^d_\CC/\TR$ is homeormorphic to either a disc or a sphere. In particular, it is a topological manifold with boundary.
\end{proposition}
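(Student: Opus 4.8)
The plan is to compute this orbit space completely explicitly via the Hopf presentation of projective space, bypassing the variation-of-GIT picture.

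First I would pass to a linear torus action on a sphere. Writing $\PP^d_\CC=S^{2d+1}/S^1$ and lifting the $\TR$-action to the linear action on $\CC^{d+1}$ with the given weights (using $u_0=0$), one obtains $\PP^d_\CC/\TR\approx S^{2d+1}/(S^1)^d$, where $(S^1)^d=S^1\times\TR$ acts coordinatewise on $\CC^{d+1}$ with weights $w_i:=(1,u_i)\in\ZZ^d$, $i=0,\dots,d$; these span $\RR^d$, so the action is automatically effective. In polar coordinates $z_i=\sqrt{\rho_i}\,\theta_i$ one identifies $S^{2d+1}$ with the quotient of $\Delta^d\times(S^1)^{d+1}$ (here $\Delta^d=\{\rho:\rho_i\ge0,\ \sum_i\rho_i=1\}$) obtained by forgetting the phase $\theta_i$ whenever $\rho_i=0$, and the subtorus acts only on the phase torus, through $\Sigma:=\{(t^{w_0},\dots,t^{w_d})\}$. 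The weights $w_i$ admit, up to scaling, exactly one integral relation $\sum_i a_iw_i=0$ with $a\in\ZZ^{d+1}$ primitive — equivalently $\sum_i a_i=0$ and $\sum_{i\ge1}a_iu_i=0$ — and $|\operatorname{supp}a|\ge2$ since no $w_i$ vanishes; moreover $(S^1)^{d+1}/\Sigma\cong S^1$ via $\theta\mapsto\theta^a$. Carrying out the two quotients and tracking the isotropy subtori over the faces of $\Delta^d$, I would show
\[\PP^d_\CC/\TR\ \approx\ (\Delta^d\times S^1)/{\sim},\]
where $\{\rho\}\times S^1$ is collapsed to a point exactly when $\rho_i=0$ for some $i\in\operatorname{supp}a$; i.e. the collapsing locus is the union of facets $C:=\bigcup_{i\in\operatorname{supp}a}\{\rho\in\Delta^d:\rho_i=0\}$.

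Next I would recognise a join. Put $S:=\operatorname{supp}a$ and write the simplex as the join $\Delta^d=\tau_S\ast\tau_{S^c}$ of the two complementary faces, where $\tau_S=\{\rho:\rho_i=0\ \forall i\in S\}$ and $\tau_{S^c}=\{\rho:\rho_i=0\ \forall i\notin S\}$. A direct coordinate check identifies $C$ with $\tau_S\ast\partial\tau_{S^c}$; since the simplex $\tau_{S^c}$ is a cone over $\partial\tau_{S^c}$, associativity of the join gives $\Delta^d\cong\tau_S\ast\partial\tau_{S^c}\ast\{\mathrm{pt}\}=C\ast\{\mathrm{pt}\}$, i.e. $\Delta^d$ is the cone over $C$ with $C\subset\Delta^d$ as its base. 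Unfolding the cone parameter together with the $S^1$-factor into a disc $D^2$, the space $(\Delta^d\times S^1)/{\sim}$ becomes $(C\times D^2)/\bigl((c,x)\sim(c',x)\text{ for }x\in\partial D^2\bigr)$, which is precisely the left-hand side of Lemma~\ref{lem:join} with $k=2$ and $Y=C$. Hence $\PP^d_\CC/\TR\approx S^1\ast C$.

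It then remains to determine $C$. If $S=\{0,\dots,d\}$ then $\tau_S=\varnothing$ and $C=\partial\Delta^d\approx S^{d-1}$, so $\PP^d_\CC/\TR\approx S^1\ast S^{d-1}=S^{d+1}$. Otherwise $\tau_S\approx D^{d-|S|}$ and $\partial\tau_{S^c}\approx S^{|S|-2}$, hence $C\approx D^{d-|S|}\ast S^{|S|-2}\approx D^{d-1}$, and $\PP^d_\CC/\TR\approx S^1\ast D^{d-1}\approx S^1\ast S^{d-2}\ast\{\mathrm{pt}\}=S^d\ast\{\mathrm{pt}\}=D^{d+1}$, using only $S^m\ast S^n\approx S^{m+n+1}$ and associativity of the join. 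In either case the orbit space is a sphere or a closed disc, in particular a topological manifold with boundary (the case $d=1$ being trivial, as then $\TR$ is trivial and $\PP^1_\CC\approx S^2$). The main obstacle is the first step — establishing that $S^{2d+1}/(S^1)^d$ really is $(\Delta^d\times S^1)/{\sim}$ with exactly the stated collapsing locus; everything after that is elementary topology of joins. The key point is the fibrewise analysis: over the relative interior of the face $\tau_I=\{\rho_i=0\ \forall i\in I\}$ the fibre of the quotient is $(S^1)^{I^c}$ modulo the subtorus generated by $\{w_i:i\notin I\}$, and this is a single point precisely when that set has full rank $|I^c|$, i.e. precisely when $\operatorname{supp}a\not\subseteq I^c$ — which unwinds to the description of $C$. (Alternatively, once one knows $(\Delta^d,C)\approx(D^d\cap H,\partial D^d\cap H)$ for a linear half-space $H$ — a consequence of the PL Schoenflies theorem applied to the embedded $(d-2)$-sphere $\partial C\subset\partial\Delta^d$ — the conclusion $\PP^d_\CC/\TR\approx S^{d+1}\cap H$ follows in one line from Lemma~\ref{lem:holed-spheres}.)
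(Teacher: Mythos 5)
Your argument is correct, but it takes a genuinely different route from the paper. The paper stays inside its GIT framework: it works on the moment polytope $P=\conv(u_0,\dots,u_d)\subset\RR^{d-1}$, uses the unique affine relation $\sum_i\alpha_iu_i=0$, $\sum_i\alpha_i=0$ to split $P$ as a join $Q\ast\Delta$ of the faces spanned by the support of $\alpha$ and its complement, identifies the GIT quotients as $\PP^1_\CC$ over points with non-unique barycentric representation (i.e.\ off $\partial Q\ast\Delta$) and as points elsewhere, and then feeds $(P\times\PP^1_\CC)/{\sim}$ into Lemma~\ref{lem:holed-spheres} after straightening $(P,\partial Q\ast\Delta)$ to a (half-)disc pair. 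You instead work upstairs on $S^{2d+1}$ with the polar-coordinate (moment-angle) decomposition over the full simplex $\Delta^d$, so your base is one dimension larger and your fibre is the residual circle $(S^1)^{d+1}/\Sigma$ rather than $\PP^1_\CC$; the same relation, now read as the linear relation $\sum_ia_iw_i=0$ among $w_i=(1,u_i)$, produces the collapsing locus $C$ and the same join splitting of the simplex. The two pictures are related by the projection $\Delta^d\to P$, $e_i\mapsto u_i$, of Example~\ref{exp:toric-downgrade}: the paper's $\PP^1_\CC$-fibre over an interior point of $P$ is exactly your segment-times-circle fibre with its ends collapsed. What your approach buys is self-containedness -- it needs neither the Kempf--Ness input behind Corollary~\ref{cor:orbit-space} nor the toric-downgrade description of the quotients, only elementary join manipulations plus Lemma~\ref{lem:join}; what the paper's approach buys is that the proof reuses precisely the machinery (Corollary~\ref{cor:orbit-space}, Lemma~\ref{lem:holed-spheres}) that is needed anyway for Theorems~\ref{thm:cplx-1} and~\ref{thm:cplx-1-sphere}. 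Two small caveats on your write-up: spanning $\RR^d$ only makes the $(S^1)^d$-action have finite kernel rather than effective (harmless here, since the orbit space is unchanged); and the crux you correctly flag -- that the double quotient is $(\Delta^d\times S^1)/{\sim}$ with the stated collapsing locus -- does require checking that the character $\theta\mapsto\theta^a$ identifies all the surviving fibres consistently (it does, because $a$ is primitive and vanishes on the coordinates indexed by $I$ whenever $I$ misses the support of $a$), so this step should be written out rather than asserted.
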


\begin{proof}
The moment image of $\PP^d_\CC$ is given by the convex hull of the weights $u_0, \ldots, u_d$. The weights $u_0, \ldots, u_d$ are necessarily affinely dependent in $\RR^{d-1}$. On the other hand they span $\RR^{d-1}$ as an affine space due to the effectiveness of the torus action.  Hence, there is a non-trivial choice of $\alpha_j \in \ZZ$, such that \(0=\sum_{i=0}^d \alpha_i u_i\) and $0=\sum_{i=0}^d \alpha_i$ and the coefficients are unique up to simultaneous scaling. 

Set $K=\{i \in \{0,\ldots,d\} \mid \alpha_i \neq 0\}$. Then $P$ is obtained as the join $Q * \Delta$ of the lower-dimensional polytopes $Q = \conv\{u_i\}_{i\in K}$ and $\Delta=\conv\{u_i\}_{i \notin K}$ of dimensions $m:=(\#K-2)$ and $n:=(d -\#K)$, respectively. Here, we allow that $\Delta = \emptyset$ and use the non-standard convention  $Q * \emptyset := Q$.  Note, that $\Delta$ is a simplex (or empty). Hence, a $u \in \Delta$ has a unique representation as $u = \sum_{j \in K} \lambda_j u_j$ with $\lambda_j \geq 0 $ and $\sum_j \lambda_j =1$. For $u \in Q$  such a representation $u = \sum_{j \notin K} \lambda_j u_j$ is unique if and only if $u \in \partial Q$. It follows that $u \in P=Q*\Delta$ has a unique such representation if and only if $u \in \partial Q * \Delta$.

Now, $Y_{\lambda(u)} = \mu^{-1}(u)/\TR$  is a point whenever $u \in P$ has a unique representation as $u = \sum_j \lambda_j u_j$ and $\mu^{-1}(u)/\TR \approx \PP^1_\CC$ otherwise. This is just a special case of Example~\ref{exp:toric-downgrade}, when $F:\RR^d \to \RR^k$ is given by $F(e_i)=u_i$ for $i=1, \ldots, d$. Then the intersection of $F^{-1}(u)$ and the standard simplex consists of all linear combinations $\sum_{i=1}^n \lambda_i e_i$ with non-negative coefficients, such that $u = \sum_{i=1}^n \lambda_i u_i$ and $\sum \lambda_i=1$. The result is a point if the linear combination is unique or a line segment if not. The corresponding toric varieties are a single point and $\PP^1_\CC$, respectively. Alternatively, it not hard to show that the non-trivial quotient morphisms $\Xss_\lambda \to Y_\lambda = \PP^1$ are all restrictions of the rational map \[\PP^d \dashrightarrow \PP^1, \qquad (z_0: \ldots : z_d) \mapsto \left(z_0^{\sum_i \alpha_i} : \prod_{i} z_i^{\alpha_i}\right).\]


From Corollary~\ref{cor:orbit-space} we obtain that 
\[\PP^d_\CC/\TR \approx \big(P\times \PP^1_\CC\big)/\sim,\]
where the equivalence relation $\sim$ is generated by $(u,y) \sim (u,y')$ with $y,y' \in \PP^1_\CC$ and $u \in \partial Q * \Delta \subset \partial P$. Topologically $\partial Q$ can be identified with a sphere $S^{m-1}$ and $Q$ with the cone $S^{m-1}*\{\pt\}$. Similarly we have a homeomorphism $\Delta \approx S^{n-1}*\{\pt\}$ for $\Delta \neq \emptyset$.
For the pair $(P,\partial Q * \Delta)$ we obtain 
\begin{align}
  (P,\partial Q * \Delta) &\approx (S^{m-1}*\pt*S^{n-1}*\{\pt\},\; S^{m-1}*(S^{n-1}*\{\pt\})) \\
  &\approx  (S^{m+n-1}*\{\pt\}*\{\pt\},\; S^{m+n-1}*\{\pt\}) \\
  &\approx (D^{m+n}*\{\pt\},\;  D^{m+n}).
\end{align}
Note, that the disc $D^{m+n}$ can be identified with the hemisphere via projection and $D^{m+n}*\{\pt\}$ with the corresponding halfdisc. Now, by choosing $H$ to be an arbitrary halfspace it follows from Lemma~\ref{lem:holed-spheres} that the orbit space is a hemisphere.

For $\Delta=\emptyset$ and $P=Q$  we see directly $(P,\partial Q * \Delta) = (Q,\partial Q) \approx (D^{d-1},S^{d-2})$ and we obtain $\PP^d_\CC/\TR \approx S^{d+1}$ from invoking Lemma~\ref{lem:holed-spheres} again, this time with $H=\RR^{d-1}$.
\end{proof}

\begin{proof}[Proof of Theorem~\ref{thm:cplx-1}]
  We first consider the situation of a complexity-one torus action on the affine space $\CC^d$. Such an action is linearisable by \cite{zbMATH03262169}.  We may equivariantly compactify the $\TC$-action on $\CC^d$ to a $\TC$-action on $\PP^d_\CC$. Then $\CC^d/\TR$ is an open subset of $\PP^d_\CC/\TR$. Hence, the claim follows from the observation in Proposition~\ref{prop:projective-space-cplx-1} that the orbit space $\PP^d_\CC/\TR$ is a manifold with boundary.

To deduce the general case we consider the two situations from Lemma~\ref{lem:2-cases-cplx-1}. If contractions to a point do not occur the equivalence relation $\sim_r$ is trivial and by Corollary~\ref{cor:orbit-space} we have $X/\TR \approx P\times Y$ which is a product of topological manifolds with boundary.

In the second situation, we have $Y \cong \PP^1$. Then by \cite[Thm~5]{zbMATH06322161} we have an equivariant open cover of $X$ by copies of $\CC^n$ and the result follows directly from the consideration above.
\end{proof}

\begin{remark}
  To deduce the general case from the case $X = \CC^d$ in the proof of Theorem~\ref{thm:cplx-1} we may alternatively consider the induced $\TC$-action on the tangent space at a fixed point and reduce everything to this situation by applying Luna's Slice Theorem.
\end{remark}

 \begin{lemma}[{\cite[Lemma~5.7]{zbMATH01663168}}]
   \label{lem:2-cases-cplx-1}
  In the situation of a complexity-one $\TC$-action on a smooth projective variety either
  \begin{enumerate}
  \item  the GIT quotients are all isomorphic to $Y$, or
  \item $Y \cong \PP^1_\CC$.
  \end{enumerate}
\end{lemma}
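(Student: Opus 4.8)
\textit{Approach.} The plan is to separate the easy dichotomy from the one substantial point. First I would fix a maximal cell $\lambda_0 \in \Lambda$ with $\lambda_0 \not\subset \partial P$; then $\Xss_{\lambda_0} = \Xss(u)$ for $u$ in the interior of $P$ is a non-empty — hence dense — open $\TC$-invariant subset of the smooth projective $X$, and by complexity one its good quotient $Y := Y_{\lambda_0}$ has dimension one. Being a good quotient of a smooth, hence normal, variety by the reductive group $\TC$, $Y$ is a normal projective curve, i.e.\ a smooth projective curve, with function field $\CC(X)^\TC$. Next I would note that whenever $\gamma \prec \lambda$ and both $Y_\lambda$, $Y_\gamma$ are one-dimensional (hence smooth projective curves by the same argument), the contraction morphism $p_{\gamma\lambda}\colon Y_\lambda \to Y_\gamma$ is a birational morphism of smooth projective curves, hence an isomorphism. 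A wall of the subdivision $\Lambda$ separating two maximal cells lies in the relative interior of $P$, so its quotient is one-dimensional; traversing the connected dual graph of the maximal cells of $\Lambda$ one gets that \emph{every} one-dimensional $Y_\lambda$ is isomorphic to $Y$. A cell $\gamma \subset \partial P$, on the other hand, may have $Y_\gamma$ zero-dimensional. Hence alternative~(1) can fail only if some $Y_\gamma$ is a single point, and it remains to show that the existence of such a point-quotient forces $Y \cong \PP^1_\CC$.

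\textit{Reduction to a local model.} So suppose $Y_\gamma = \{\pt\}$. Then the smooth, open, $\TC$-invariant $\Xss_\gamma \subseteq X$ has a unique closed $\TC$-orbit $O_0$, to which every $\TC$-orbit in $\Xss_\gamma$ degenerates. Let $H \subseteq \TC$ be the subtorus stabilising $O_0$; after replacing $\TC$ by a suitable quotient we may assume the slice representation of $H$ below is faithful. By Luna's slice theorem at a point $x_0 \in O_0$ (applied on a $\TC$-invariant affine neighbourhood of $O_0$, which exists by Sumihiro's theorem) there is a smooth affine $H$-variety $S$ with $H$-fixed point $x_0$ together with a $\TC$-equivariant \'etale morphism $\TC \times_H S \to \Xss_\gamma$ onto a $\TC$-saturated neighbourhood of $O_0$, \'etale on the quotients as well. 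Because $O_0$ is the only closed orbit, $\{x_0\}$ is the only closed $H$-orbit of $S$, so $S/\!\!/H$ is a point, every orbit of $S$ specialises to $x_0$, and $\dim S = \dim H + 1$. Thus $H \cong (\CC^*)^m$ acts on the smooth affine $(m+1)$-fold $S$ with complexity one and with $x_0$ an attractive fixed point that is its unique closed orbit.

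\textit{The local computation and L\"uroth.} The concluding step is local. A smooth affine variety carrying a torus action all of whose orbits specialise to one fixed point is equivariantly isomorphic to its tangent space at that point with the induced linear action: with respect to a generic one-parameter subgroup of $H$, $\CC[S]$ is a finitely generated, non-negatively graded $\CC$-algebra with $\CC[S]_0 = \CC$ whose spectrum is smooth, hence a polynomial ring. So $S \cong \mathbb{A}^{m+1}$ with a faithful linear $(\CC^*)^m$-action; its weights $w_0, \dots, w_m$ span $\RR^m$, so they satisfy an integral relation $\sum_i \alpha_i w_i = 0$ that is unique up to scale, and this relation has mixed signs, since otherwise $\CC[S]^H$ would contain a non-constant monomial, contradicting $S/\!\!/H = \{\pt\}$. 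Hence $\CC(S)^H = \CC\bigl(z_0^{\alpha_0}\cdots z_m^{\alpha_m}\bigr)$ is purely transcendental of degree one over $\CC$. Finally $\CC(X)^\TC = \CC(\Xss_\gamma)^\TC$ embeds, through the above \'etale morphism, into $\CC(\TC \times_H S)^\TC = \CC(S)^H$ and has transcendence degree one, so L\"uroth's theorem gives $\CC(X)^\TC \cong \CC(t)$. As $Y$ is the smooth projective curve with this function field, $Y \cong \PP^1_\CC$, which is alternative~(2).

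\textit{Main obstacle.} I anticipate the technical heart to be the treatment of the unique closed orbit $O_0$: carrying out the Luna-slice reduction carefully (the faithfulness normalisation, the choice of a suitable invariant affine neighbourhood, and the deduction that $S/\!\!/H$ is a point from the uniqueness of $O_0$) and proving cleanly that a smooth affine variety with a one-point-attracting torus action is an affine space with linear action. The first paragraph, by contrast, is just the combinatorics of polyhedral subdivisions plus the fact that a birational morphism of smooth projective curves is an isomorphism, and the closing field-theoretic argument is routine once the local model is available. One could alternatively read everything off the Altmann--Hausen presentation of $X$ by a divisorial fan on $Y$, where a point-quotient corresponds to a polyhedral divisor with complete locus on $Y$ and the existence of such data again forces $Y \cong \PP^1_\CC$; I would use whichever formulation is lighter in the surrounding text.
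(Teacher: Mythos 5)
First, note that the paper does not actually prove this lemma: it is quoted from the literature via the bracketed citation in the statement, so there is no in-paper argument to compare yours against. Your reconstruction is therefore a genuine addition, and its architecture is sound. The substantial half --- reducing a point-quotient $Y_\gamma=\{\pt\}$ to a local model via Luna's slice theorem, showing the slice is an affine space with a linear, ``pointed'' torus action, computing the invariant function field from the unique sign-mixed relation among the weights, and concluding $\CC(X)^{\TC}\cong\CC(t)$ by L\"uroth --- is correct and complete modulo standard facts. Two small simplifications: you do not need Sumihiro's theorem to make Luna applicable, since the good quotient $\Xss_\gamma\to Y_\gamma=\{\pt\}$ is an affine morphism, so $\Xss_\gamma$ is itself affine and $O_0$ is closed in it; and the finite part of the stabiliser (which need not be a subtorus) causes no harm, because $\CC(S)^H\subseteq\CC(S)^{H^\circ}\cong\CC(t)$ is all that L\"uroth requires.

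The one genuine gap sits in your first paragraph, exactly where the content of alternative~(1) lives. You assert that the contraction $p_{\gamma\lambda}\colon Y_\lambda\to Y_\gamma$ between two one-dimensional quotients ``is a birational morphism of smooth projective curves.'' A priori the inclusion $\Xss_\lambda\subseteq\Xss_\gamma$ only yields a dominant, hence finite surjective, morphism of normal projective curves; its degree is the number of orbits of $\Xps_\lambda$ in a general fibre of $q_\gamma$, and nothing you write rules out degree at least two. This is not cosmetic: the paper uses the lemma in the proof of Theorem~\ref{thm:cplx-1} in the form that the maps $r_\lambda\colon Y\to Y_\lambda$ are isomorphisms, not merely that the $Y_\lambda$ are abstractly isomorphic. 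The assertion is true and can be repaired: for a maximal chamber $\lambda$ choose $u\in\lambda^\circ$ avoiding the finitely many lower-dimensional polytopes $\Delta(x)$, so that $\Xss_\lambda$ consists only of closed full-dimensional orbits and $\CC(Y_\lambda)=\CC(X)^{\TC}$; for an interior wall $\gamma$ one checks that a general fibre of $q_\gamma$ is either a single irreducible orbit closure, whose orbits correspond to the faces of one polytope $\Delta(O_y)$ of which exactly one contains $v$ in its relative interior, or has its closed orbit inside a fixed-point divisor of a one-dimensional subtorus, in which case exactly one full-dimensional orbit on the $v$-side of the wall degenerates to it. Either supply such an argument or cite the wall-crossing birationality statements from the variation-of-GIT literature; with that, your proof is complete.
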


Consider the following definition.

\begin{definition}
  A $k$-holed $n$-sphere is defined as the intersection of $S^n \subset \RR^{n+1}$ with $k$ affine closed halfspaces, such that $S^n$ is contained in none of them and their boundary hyperplanes intersect only outside the sphere. 
\end{definition}

In other words, we remove the interiors of $k$ disjoint closed discs from $S^n$.

\begin{proposition}
\label{prop:combinatorial-holed-sphere}
  Consider a smooth $\TC$-variety $X$ of complexity $1$ with moment polytope $P$ and stratification $\Lambda$. Let $\Lambda'$ be the set of $\lambda \in \Lambda$, such that $\lambda \subset \partial P$ and $Y_\lambda \neq \{\pt\}$. Assume that $\Lambda'$ consists of $k$ disjoint polytopes. Then $X/\TR$ is a $k$-holed sphere.
\end{proposition}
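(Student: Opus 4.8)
The plan is to feed the description of the orbit space from Corollary~\ref{cor:orbit-space} into Lemma~\ref{lem:holed-spheres}, after first determining precisely over which part of $P$ the generic fibre gets collapsed. Write $d := \dim P$, which equals the rank of $\TC$; since the complexity is $1$ the orbit space has dimension $d+2$, so the target is a $k$-holed $(d+2)$-sphere. Because $\Lambda'$ is nonempty while collapses to a point occur over the remaining boundary strata, we are in case (2) of Lemma~\ref{lem:2-cases-cplx-1}, so the common curve is $Y \cong \PP^1_\CC$, i.e.\ $Y \approx S^2$. Applying Corollary~\ref{cor:orbit-space} with this $Y$, the maps $r_{\lambda}\colon Y \to Y_\lambda$ are the identity whenever $Y_\lambda \neq \{\pt\}$ and constant whenever $Y_\lambda = \{\pt\}$; hence $\sim_r$ collapses the fibre $\{u\}\times Y$ to a point exactly over the \emph{collapse locus}
\[ C := \{u \in P \mid Y_{\lambda(u)} = \{\pt\}\}, \]
and leaves it untouched elsewhere. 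Thus $X/\TR \approx (P\times Y)/{\sim_r}$ with $Y\approx S^2$, the collapse occurring precisely over $C$.

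Next I would pin down the topology of the pair $(P, C)$. Over the interior $P^\circ$ every quotient is the full curve, so $C \subseteq \partial P$. Using the contraction morphisms $p_{\gamma\lambda}\colon Y_\lambda \to Y_\gamma$ for $\gamma \prec \lambda$, which in complexity $1$ are isomorphisms or contractions $\PP^1_\CC \to \{\pt\}$, one checks that the collapse locus is closed under passing to faces, while the non-collapse locus $\partial P \setminus C$ is closed under passing to larger strata; consequently the maximal non-collapse strata are facets of $P$ and $\partial P\setminus C = \bigcup_{\lambda\in\Lambda'}\lambda^\circ$. By hypothesis this region breaks into $k$ disjoint polytopes, so, identifying $\partial P \approx S^{d-1}$, the set $C$ is the sphere $S^{d-1}$ with $k$ disjoint open $(d-1)$-discs (having pairwise disjoint closures) removed.

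The third step is to realise $(P,C)$ as a truncated round disc. For each of the $k$ holes I would choose a closed halfspace $H_i \subset \RR^d$ whose complementary cap $D^d\setminus H_i$ is a small spherical cap of $D^d$ modelling that hole, arranging the caps to be pairwise disjoint and the bounding hyperplanes $\partial H_i$ to meet only outside $D^d$. Setting $H := \bigcap_{i=1}^k H_i$, the truncated disc $D^d \cap H$ is again homeomorphic to $D^d$, and there is a homeomorphism of pairs $(D^d\cap H,\ \partial D^d\cap H) \approx (P, C)$ carrying the spherical part $\partial D^d \cap H$ onto $C$ and the $k$ flat facets $\partial H \cap D^d$ onto the $k$ holes.

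Finally, under this homeomorphism together with $Y\approx S^2$, the relation $\sim_r$ (collapse over $C$) turns into the relation $\sim_\partial$ of Lemma~\ref{lem:holed-spheres} (collapse over $\partial D^d \cap H$). Lemma~\ref{lem:holed-spheres} with $m = 2$ then yields
\[ X/\TR \approx \big((D^d\cap H)\times S^2\big)/{\sim_\partial} \approx S^{d+2}\cap (H\times\RR^2). \]
Since $H\times\RR^2 = \bigcap_{i=1}^k (H_i\times\RR^2)$ is an intersection of $k$ closed halfspaces of $\RR^{d+2}$ whose bounding hyperplanes meet only outside $S^{d+2}$ and none of which contains the sphere, the right-hand side is exactly a $k$-holed $(d+2)$-sphere, as claimed. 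The hard part is the middle two steps: establishing that the non-collapse boundary region is a disjoint union of $k$ genuine top-dimensional discs (rather than thin, lower-dimensional pieces) and then making the pair-homeomorphism $(P,C)\approx(D^d\cap H,\partial D^d\cap H)$ precise; the concluding application of Lemma~\ref{lem:holed-spheres} is then routine.
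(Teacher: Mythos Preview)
Your proposal is correct and follows the same route as the paper: identify the collapse locus as $C=\partial P\setminus\bigcup_{\lambda\in\Lambda'}\lambda^\circ$, use Corollary~\ref{cor:orbit-space} with $Y\cong\PP^1_\CC\approx S^2$, and then feed the pair $(P,C)\approx(D^d\cap H,\partial D^d\cap H)$ into Lemma~\ref{lem:holed-spheres}. You spell out two points the paper leaves implicit—invoking Lemma~\ref{lem:2-cases-cplx-1} to force $Y\cong\PP^1_\CC$, and the face-relation argument showing each $\lambda\in\Lambda'$ is a top-dimensional boundary cell—so your write-up is in fact more complete than the paper's three-line proof; one small quibble is that these cells need not be full facets of $P$, only $(d-1)$-dimensional strata of $\Lambda$, but this does not affect your argument.
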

\begin{proof}
  In our situation the equivalence relation $\sim_r$ from Corollary~\ref{cor:orbit-space} is generated by $(u,y) \sim_r (u,y')$ for $u \in  \partial P \setminus \bigcup_{\lambda \in \Lambda'}\lambda^\circ$. Note that by the preconditions $\partial P \setminus \bigcup_{\lambda \in \Lambda'}\lambda^\circ$ is a $k$-holed sphere. Hence, by Lemma~\ref{lem:holed-spheres} the orbit space $X/\TC \approx (P \times \PP^1)/\sim_r$ is a $k$-holed sphere as well.
\end{proof}

This statement is useful to determine the orbit space in concrete situation. To demonstrate this we look at the classification of Fano threefolds from \cite{Mori1981}. In \cite{suess:picbook,autfano} the ones within the classification which admit a $\TC$-action of complexity $1$ where identified. In the following we determine the orbit spaces for all of them.


\begin{theorem}
  Using the notation of the Mori-Mukai classification \cite{Mori1981} for the Fano threefolds with $(\CC^*)^2$-action we obtain the orbits spaces being  $k$-holed spheres with $k=0$ for $Q$, 2.24, 2.29, 2.32, 3.10 and 3.20 with $k=1$ for 2.30, 2.31, 3.18, 3.21, 3.23 and 3.24, with $k=2$ for 3.19, 3.22, 4.4, 4.5, 4.7 and 4.8. 
\end{theorem}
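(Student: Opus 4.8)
The plan is to apply Proposition~\ref{prop:combinatorial-holed-sphere} uniformly to every threefold on the list. That proposition reduces the determination of the orbit space to a purely combinatorial problem: for each Fano threefold $X$ with its $(\CC^*)^2$-action one has to produce the moment polytope $P \subset \RR^2$ together with the stratification $\Lambda$, identify the subset $\Lambda' \subset \Lambda$ of boundary cells over which the GIT quotient is $\PP^1_\CC$ rather than a point, and check that $\Lambda'$ consists of $k$ pairwise disjoint polytopes. The number $k$ is then the number of holes. So the proof amounts to carrying out this bookkeeping for each of the $\sim\!18$ cases.

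First I would recall, from \cite{suess:picbook,autfano}, the explicit combinatorial data describing each of these threefolds as a complexity-one $\TC$-variety; in that formalism such a variety is encoded by a curve (here always $\PP^1_\CC$, since the quotient $Y$ is $\PP^1$) together with a collection of divisorial fans / polyhedral divisors on it, and the momentum polytope $P$ and its GIT-chamber decomposition can be read off directly. Concretely, for each $X$ one writes down $P$, the walls of the variation of GIT, and — crucially — which faces of $P$ carry a one-dimensional GIT quotient. A face $\lambda \subset \partial P$ lies in $\Lambda'$ exactly when the moment fibre over its relative interior contains a one-parameter family of closed orbits, equivalently when $Y_\lambda \cong \PP^1_\CC$; over all other boundary faces the fibre is a single orbit and $Y_\lambda = \{\pt\}$. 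Having listed $\Lambda'$ one observes that in each case the cells of $\Lambda'$ are mutually non-adjacent on $\partial P$ (their closures are disjoint), so Proposition~\ref{prop:combinatorial-holed-sphere} applies with that value of $k$, giving the stated $k$-holed sphere. For $Q$ this is already covered by Theorem~\ref{thm:grassmannian}/Example~\ref{exp:toric-downgrade} with $n=6$ ($k=0$, i.e. $S^4$), which provides a useful consistency check.

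I expect the main obstacle to be the combinatorial case analysis itself: it is not conceptually hard, but it is voluminous and error-prone, and for a handful of the reducible cases (the Picard-rank-$3$ and $4$ examples such as 3.19, 3.22, 4.4, 4.5, 4.7, 4.8) one must be careful that the two boundary cells with $Y_\lambda = \PP^1_\CC$ really do have disjoint closures rather than meeting at a vertex — if they met, the hypothesis of Proposition~\ref{prop:combinatorial-holed-sphere} would fail and the orbit space would be a quotient of a $k$-holed sphere rather than a $k$-holed sphere. A secondary technical point is the correct identification, for each $X$, of exactly which source divisors of the polyhedral divisor contribute a $\PP^1$-quotient; this is where the input from \cite{suess:picbook,autfano} does the real work, and I would simply tabulate the answer for all cases rather than rederive it. The write-up would therefore consist of a table listing, for each threefold, $P$ (up to affine equivalence), the cells of $\Lambda'$, and the resulting $k$, followed by the one-line invocation of Proposition~\ref{prop:combinatorial-holed-sphere}.
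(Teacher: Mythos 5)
Your proposal follows essentially the same route as the paper: reduce everything to Proposition~\ref{prop:combinatorial-holed-sphere} and carry out a case-by-case combinatorial check using the data tabulated in \cite{suess:picbook,autfano}, with the only substantive input being the identification of which boundary cells carry a $\PP^1_\CC$-quotient (in the paper this is read off from the degree of the piecewise linear divisor-valued map $\Psi$, via the criterion $Y_{\lambda(u)}=\{\pt\}$ iff $\deg\Psi(u)=0$ from \cite[Sec.~3.1]{suess:picbook}). Your attention to the disjointness of the closures of the cells in $\Lambda'$ is exactly the hypothesis the paper also verifies, so the two arguments coincide.
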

\begin{proof}
  For every of the above Fano varieties the combinatorial data provided in \cite[Sec. 5]{suess:picbook} consists of the moment polytope $P$ and a piecewise linear map
\[\Psi \colon P \to \Div_\RR \PP^1_\CC\]
from the polytope to the vector space of $\RR$-divisors on $\PP^1$. By
\cite[Sec.~3.1]{suess:picbook} we have $Y_{\lambda(u)}=\{\pt\}$ if and only $\deg \Psi(u)=0$. Now, one checks that in every case the subset $\{u \in \partial P \mid \deg \Psi(u)>0\} \subset \partial P$ consists of the interior of $k$ disjoint facts of $P$. Applying Proposition~\ref{prop:combinatorial-holed-sphere} gives the desired result.
\end{proof}

\begin{remark}
  Note, that $Q$ is the smooth quadric and by Section~\ref{sec:orient-grassm} coincides with $\GR^+(2,5)$. Moreover, 2.32 is the variety of complete flags in $\CC^3$. Hence, we recover results of \cite[Prop. 8]{2018arXiv180305766B} and \cite{kt-top}.
\end{remark}

\begin{proof}[Proof of Theorem~\ref{thm:cplx-1-sphere}]
  To have finitely many lower dimensional $\TC$-orbits implies that the GIT quotients $Y_u$ for $u \in \partial P$ are just points. Then by Lemma~\ref{lem:2-cases-cplx-1} we conclude that $Y_u = \PP_\CC^1$ for every $u \in P^\circ$. By applying Proposition~\ref{prop:trivial-git} we obtain 
\[X/\TR \approx \PP^1 \ast S^{k-1} \approx S^2 \ast S^{k-1} \approx S^{k+2}.\]
\end{proof}

\enlargethispage{0.7cm}
\bibliography{topquot}
\bibliographystyle{halpha}
\end{document}